\newtheorem{theorem}{Theorem}[section]
\newtheorem{lemma}[theorem]{Lemma}
\newtheorem{remark}[theorem]{Remark}
\newtheorem*{definition}{Definition}
\newtheorem{corollary}[theorem]{Corollary}
\newcommand{\C}{\mathbb C}
\newcommand{\Z}{\mathbb Z}
\newcommand{\T}{\mathbb T}
\newcommand{\N}{\mathbb N}
\newcommand{\R}{\mathbb R}
\newcommand{\ol}{\overline}
\newcommand{\D}{\mathbb D}
\newcommand{\Q}{\mathbb Q}
\title[Localization]{Localization for magnetic quantum walks}
\author{Fan Yang}
\address{Department of Mathematics, Louisianan State University, Baton Rouge, LA 70803}
\begin{document}

\begin{abstract} 
We prove Anderson localization for all Diophantine frequencies and all non-resonant phases for a model that arises from 2D quantum walk model subject to an external magnetic field. 
This is the first localization result for all Diophantine frequencies in the magnetic quantum walk and the quasi-periodic CMV setting. 
We also obtain sharp asymptotics of the localized eigenfunctions.
\end{abstract}

\maketitle

\section{Introduction}

Quantum walk was first proposed by Aharonov, Davidovich and Zagury \cite{ADZ}. It can be viewed as a quantum mechanical analogue of the classical random walk.
Compared to the diffusive transport in the classical random walk, quantum walk leads to a ballistic spreading of the particle's wave function. 
This fast spreading property has played a pivotal role in the development of quantum algorithms \cite{Amb07, Por2013}, including search algorithms, element distinctness and matrix product verification.
Besides the applications in quantum information science, quantum walks are also very accessible and interesting to both experimental and theoretical studies for many complex quantum phenomena in physics. We refer interested readers to \cite{VA12,Kon08} for a comprehensive review on quantum walks.

In recent years, there have been an growing interest on quantum walk in mathematical community, see e.g. \cite{BGVW, CGMV, CWloc,DFO,JM,HJ,J}.
In particular, in \cite{CGMV}, the authors discovered a beautiful connection between quantum walks and the CMV matrices,
which is a class of unitary operators which arise in the theory of orthogonal polynomials on the unit circle (OPUC) \cite{SOPUC,5years}.
Recently, quantum walk model in electric fields also attracts a lot of attention in physics, see e.g. \cite{GASWWMA,CRWAGW,WLKGB}, and Anderson localization for that model was proved in \cite{CWloc} for a.e. electric field. In an upcoming work \cite{ew}, we prove localization for electric quantum walks for all Diophantine fields.

Now I will introduce the model that we study, which was given recently in \cite{CFO} as a generalization of the model studied in \cite{FOZ}. 
This model arises from the two dimensional quantum walk on $\Z^2$ subject to a homogeneous magnetic field, see Section 3 of \cite{CFO}.

Let $W: \ell^2(\Z)\otimes \C^2\to \ell^2(\Z)\otimes \C^2=:\mathcal{H}$ be a quantum walk defined by a quasi-periodic sequence of coins. 
We denote the standard basis of $\mathcal{H}$
\begin{align*}
\delta_n^s=\delta_n\otimes e_s,\ \  n\in \Z,\  s\in \{+,-\},
\end{align*} 
where $\{\delta_n\}$ is the standard basis of $\ell^2(\Z)$ and $\{e_+=(1,0)^T,\ \ e_-=(0,1)^T\}$ is the standard basis of $\C^2$.
Given coupling constants $\lambda_1,\lambda_2\in [0,1]$, let $\lambda_j':=\sqrt{1-\lambda_j^2}$, $j=1,2$.
Let frequency $\omega\in \T$ and phase $\theta\in \T$, we consider the following operator acting on $\mathcal{H}_1$,
\begin{align}\label{def:W}
W_{\lambda_1,\lambda_2,\omega,\theta}:=S_{\lambda_1} Q_{\lambda_2,\omega,\theta},
\end{align}
where $Q_{\lambda_2,\omega,\theta}$ acts coordinate-wise via $Q_{\lambda_2,\omega,\theta,n}$ defined by 
\begin{align*}
Q_{\lambda_2,\omega,\theta,n}=
\left(\begin{matrix}
\lambda_2\cos(2\pi(\theta+n\omega))+i\lambda_2' & -\lambda_2\sin(2\pi(\theta+n\omega))\\
\lambda_2\sin(2\pi(\theta+n\omega)) &\lambda_2\cos(2\pi(\theta+n\omega))-i\lambda_2'
\end{matrix}\right)
\end{align*}
and 
\begin{align*}
S_{\lambda_1}\delta_n^{\pm}=\lambda_1\delta_{n\pm 1}^{\pm}\pm \lambda_1'\delta_n^{\mp}.
\end{align*}

This model is called the unitary almost Mathieu operator (UAMO) due to its close connection to the celebrated almost Mathieu operator. 
In a recent work \cite{CFO}, the authors generalized the shift operator from $S_1$ in \cite{FOZ} to $S_{\lambda_1}$, which leads to a spectral transition phenomena in the unitary setting. 
In particular, \cite{CFO} proves Anderson localization for almost every frequency $\omega$ and phase $\theta$ in the positive Lyapunov exponent regime. 
Their result is in the measure theoretical setting, hence is inconclusive for {\it any} given Diophantine frequency and {\it any} given phase.

In this paper, we improve on this result by proving localization for all Diophantine frequencies \eqref{def:Dio}, and a.e. $\theta$ which is arithmetically characterized by \eqref{eq:theta_set}.
This proof is highly inspired by Jitomirskaya's work \cite{jit99} on the almost Mathieu operator. 
The approach we present here makes the study of many central topics of the unitary almost Mathieu operator possible, including the (dry) Ten Martini problem \cite{Puig,AJ1,AJ2}, sharp arithmetic spectral transitions \cite{jit99,AYZ,JL}, eigenfunction asymptotics and hierarchy \cite{JL,JL2}, and sharp H\"older continuity of the Lyapunov exponent, density of states and spectral measures \cite{AJ2,AJ3}.

This model fits into the framework of generalized extended CMV matrix \cite{CGMV}, and it is more convenient for us to work within that setting. Let us review the connection below.

For notational convenience in the rest of this paper we identify $\ell^2(\Z)\otimes \C^2 \to \ell^2(\Z)$ via
$$\delta_n^+ \mapsto \delta_{2n},\ \ \delta_n^-\mapsto \delta_{2n+1}.$$

For $n\in \Z$, let 
\begin{align}\label{def:alpha_n}
\begin{cases}
\alpha_{2n}:=\lambda_1' \text{ and } \rho_{2n}:=\lambda_1\\
\alpha_{2n+1}:=\lambda_2\sin(2\pi(\theta+n\omega)) \text{ and } \rho_{2n+1}:=\lambda_2\cos(2\pi(\theta+n\omega))+i\lambda_2'
\end{cases}
\end{align}
It is clear that $|\alpha_n|^2+|\rho_n|^2=1$.
Let 
\begin{align}\label{def:Theta}
\Theta_n:=\left(\begin{matrix}
\ol{\alpha}_n &\rho_n\\
\ol{\rho}_n &-\alpha_n
\end{matrix}\right).
\end{align}
Note that in the standard CMV setting, the lower-left entry of $\Theta_n$ is a real-valued $\rho_n$ instead of our complex-valued $\ol{\rho}_n$.
Hence this model is a generalization of the the CMV matrix.

\begin{align}\label{def:LM}
\mathcal{L}_{\lambda_1}=\bigoplus \Theta_{2n}, \text{ and } \mathcal{M}_{\lambda_2,\omega,\theta}:=\bigoplus \Theta_{2n+1},
\end{align}
where $\Theta_{n}$ acts on $\ell^2(\{n, n+1\})$.
Then the operator $W$ can be written as follows:
\begin{align}\label{eq:W=LM}
W_{\lambda_1,\lambda_2,\omega,\theta}=\mathcal{L}_{\lambda_1}\mathcal{M}_{\lambda_2,\omega,\theta}.
\end{align}
This is an extended CMV matrix, since it is a whole-line operator. It is clear that the spectrum $\sigma(W)$ of $W$ is contained in $\partial \D$.

Now we state the main result of this paper. Let $\|x\|_{\T}=\mathrm{dist}(x,\Z)$ be the torus norm.
We say $\omega$ is Diophantine, denoted by $\omega\in \mathrm{DC}$, if 
\begin{align}\label{def:Dio}
\omega\in \cup_{\tau>1}\cup_{\nu>0} \mathrm{DC}(\tau,\nu),
\end{align}
where
\begin{align*}
\mathrm{DC}(\tau,\nu):=\{\omega\in \T:\ \|n\omega\|_{\T}\geq \frac{\nu}{|n|^{\tau}}, \ \text{ for any } 0\neq n\in \Z\}.
\end{align*}
\begin{theorem}\label{thm:main}
For $0\leq \lambda_1<\lambda_2\leq 1$, for all Diophantine frequencies $\omega$ and $\theta$ such that
\begin{align}\label{eq:theta_set}
\tilde{\gamma}(\omega,\theta):=\limsup_{n\to \infty} -\frac{\ln\|2\theta-\frac{1}{2}+n\omega\|_{\T}}{|n|}=0
\end{align} 
$W_{\lambda_1,\lambda_2,\omega,\theta}$ has Anderson localization, namely pure point spectrum with exponentially decaying eigenfunctions. Furthermore, let $\Psi$ be such an exponentially decaying eigenfunction, we have its sharp asymptotic as follows
\begin{align}\label{eq:decay_rate}
\lim_{|y|\to \infty} \frac{\ln (|\Psi_y|^2+|\Psi_{y+1}|^2)}{2|y|}=-\frac{1}{2}\ln\frac{\lambda_2(1+\lambda_1')}{\lambda_1(1+\lambda_2')},
\end{align}
where the decay rate above is the Lyapunov exponent of the associated Szeg\"o cocycle.
\end{theorem}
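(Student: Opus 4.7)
The plan is to adapt Jitomirskaya's approach \cite{jit99} from the almost Mathieu operator to the present CMV/quantum walk setting. By a Schnol-type argument for unitary operators, it suffices to show that any polynomially bounded generalized eigenfunction $\Psi$ of $W_{\lambda_1,\lambda_2,\omega,\theta}$, at a spectrally generic $z\in \partial \D$, in fact decays exponentially with the rate advertised in \eqref{eq:decay_rate}. I would first encode such a $\Psi$ via the associated Szeg\"o cocycle and identify its Lyapunov exponent with $L(z):=\tfrac12\ln\tfrac{\lambda_2(1+\lambda_1')}{\lambda_1(1+\lambda_2')}$; positivity here is a consequence of $\lambda_1<\lambda_2$ and plays the same role that positivity of the Lyapunov exponent plays in the AMO proof.

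Next I would set up the discrete Poisson formula on a finite interval $I=[x_1,x_2]\subset \Z$: the values of $\Psi$ in the interior of $I$ are controlled by the boundary values of $\Psi$ times entries of the finite-volume Green's function $G_I(x,y)=((W_I-z)^{-1})(x,y)$. By Cramer's rule, $|G_I(x,y)|$ is a ratio of determinants built from the Szeg\"o transfer matrices, so uniform upper semicontinuity of the Lyapunov exponent together with large deviation estimates for the Szeg\"o cocycle yield the standard dichotomy of intervals into \emph{good} ones, on which $G_I$ decays at a rate close to $L(z)$, and \emph{bad} ones.

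The crux of the argument is then to show that for every sufficiently distant $y$ one can find a good interval that contains $y$ well inside, so that the Poisson formula combined with the polynomial bound on $\Psi$ forces $\Psi$ to decay exponentially at $y$. This regularity at every large $|y|$ requires a quantitative non-resonance estimate on the base points $\theta+m\omega$. Here the Diophantine condition \eqref{def:Dio} on $\omega$ controls denominators, while the arithmetic condition \eqref{eq:theta_set}, $\tilde\gamma(\omega,\theta)=0$, is precisely what is needed to avoid resonances with the singular phase of the $\lambda_2$-coin, i.e.\ with the values $2\theta\equiv \tfrac12 \pmod{\Z}$ at which the relevant Dirichlet determinants of the Szeg\"o cocycle degenerate. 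The Jitomirskaya-style combinatorial bookkeeping, adapted to the explicit form of these Dirichlet determinants, should then yield that out of two canonical candidate intervals centered near $y$ at least one is good.

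Finally, for the sharp asymptotic \eqref{eq:decay_rate}, the upper bound on $|\Psi_y|^2+|\Psi_{y+1}|^2$ follows by optimizing the above good interval construction, letting the scale tend to infinity and using uniform large deviations to match the rate $L(z)$. For the matching lower bound I would argue that a strictly faster decay of $\Psi$ at $+\infty$ or $-\infty$, together with any second linearly independent solution of the Szeg\"o equation, would force both Oseledets directions of the cocycle to decay and thereby contradict its non-uniform hyperbolicity. The step I expect to be the main obstacle is the adaptation of the large deviation and Dirichlet determinant estimates from the real, cosine-based AMO setting to the complex, matrix-valued Szeg\"o setting: the sampling functions here are built from the entries of $\Theta_n$ in \eqref{def:Theta} rather than from a single cosine, and the correct identification of the singular phase $2\theta\equiv\tfrac12$ with the resonance condition \eqref{eq:theta_set} must be made carefully rather than imported verbatim from \cite{jit99}.
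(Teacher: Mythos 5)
Your high-level skeleton is the right one and tracks the paper closely: Schnol reduction, Poisson/Cramer expansion of $\Psi$ over finite boxes, a lower bound on the Dirichlet determinant at one of two candidate boxes together with upper bounds on the numerators, and finally the matching lower bound for \eqref{eq:decay_rate} via the non-uniform hyperbolicity of the Szeg\"o cocycle. You also correctly flag the obstacle (``the sampling functions here are built from the entries of $\Theta_n$ \dots rather than from a single cosine''), but you do not resolve it, and that resolution is the paper's actual key contribution. Specifically, what is needed is Lemma~\ref{lem:main}: the characteristic polynomial $P_{[1,2n],z}(\theta)=\det\big(z-W|_{[1,2n]}\big)$ is a polynomial of $\sin\big(2\pi(\theta+\tfrac{n-1}{2}\omega)\big)$ of degree at most $n$ — \emph{not} $2n$, as one would naively guess from a $2n\times2n$ matrix with degree-one trigonometric entries, and in a \emph{single} shifted trigonometric variable, not in $\cos(2\pi\theta)$ and $\sin(2\pi\theta)$ separately. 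This is proved in two steps: the factorization \eqref{eq:P_upper_left} through the two-step Szeg\"o matrix $D(\theta,z)$ gives degree at most $n$ (Lemma~\ref{lem:poly1}), and a conjugation by explicit permutation/sign matrices $L_1,L_2,L_3$ shows $P_{[1,2n],z}\big(\theta+\tfrac14-\tfrac{n-1}{2}\omega\big)$ is even (Lemma~\ref{lem:poly2}), yielding the precise symmetry center. Without this degree bound and symmetry center, the Lagrange interpolation \eqref{eq:Lagrange} and the $\varepsilon$-uniformity of the sample set $\{\theta+j\omega-\tfrac14:\ j\in I_1\cup I_2\}$ cannot be set up with the correct node count $h+1\approx n+1$, and the whole ``at least one of two boxes is regular'' step collapses.

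Two further points. First, the quantitative lower bound you call a ``large deviation estimate'' is, in the paper, obtained by Herman's subharmonicity trick (Lemma~\ref{lem:ave_low}) giving an \emph{averaged} bound $\frac{1}{2n}\int\ln|P_{[1,2n],z}|\ge \tfrac12\ln\big(\lambda_2(1+\lambda_1')\big)-\varepsilon$, which is then converted into a pointwise bound on one candidate box via uniformity plus interpolation — exactly as in \cite{jit99} — rather than by a Bourgain--Goldstein-type large deviation theorem. Second, to identify the decay rate with $L=\ln\frac{\lambda_2(1+\lambda_1')}{\lambda_1(1+\lambda_2')}$ you need the equivalence of the Szeg\"o, Gesztesy--Zinchenko and transfer-matrix cocycles (Section~\ref{sec:equiv}, \eqref{eq:LA=LS=LM}), since the Lyapunov exponent formula of Theorem~\ref{thm:LE} is stated for the $A$-cocycle; your proposal asserts the value of the exponent without this linkage. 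Your Oseledets argument for the lower bound of \eqref{eq:decay_rate} is the standard one and matches what the paper cites.
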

It is well known that $a.e.\, \omega$ is Diophantine, and $\theta$ satisfying \eqref{eq:theta_set} (the non-resonant phases) is a full measure set.
One should compare our result to Theorem 2.2 (b) of \cite{CFO}, where localization was proved for a.e. $\omega,\theta$ only in the measure-theoretic setting. 
To the best of knowledge, our result is the first localization result for all Diophantine frequencies in the CMV setting and the quantum walk setting, see \cite{WD,CWloc,CFO}.
The sharp asymptotic \eqref{eq:decay_rate} also adds to the recent growing collection of exact characterizations of quasi-periodic eigenfunctions \cite{JL,JL2,ehmtransition,HJYMaryland,ew}.

The key to our proof is Lemma \ref{lem:main}, where we show $P_{[1,2n],z}=\det(z-\mathcal{L}\mathcal{M})|_{[1,2n]}$ is a polynomial of $\sin(2\pi(\theta+\frac{n-1}{2}\omega))$ of degree at most $n$.
A result of this kind was the key to the study of the almost Mathieu operator in \cite{jit99}, and it laid the foundation for most of the recent breakthroughs \cite{jit99,Puig,AJ1,AJ2,JL} on the almost Mathieu operator.
Our paper makes the study of all these problems for the unitary almost Mathieu operator possible.

Next let us comment on the difficulties of the proof of Lemma \ref{lem:main}. 
To show $P_{[1,2n],z}$ is a polynomial of $\sin$, we exploit the symmetries of the matrix $(z-\mathcal{L}\mathcal{M})|_{[1,2n]}$.
Here the symmetries are more complicated than the almost Mathieu operator (or the extended Harper setting \cite{JKS,Handry}), since our matrix is a block Jacobi matrix rather than a scalar Schr\"odinger (Jacobi) operator. 
Furthermore, at a first glance, one might expect that $P_{[1,2n],z}$ is a polynomial of degree $2n$, since it is the determinant of a $2n\times 2n$ matrix whose non-zero entries are all polynomials of degree $1$.
Here we prove that the degree is only half of $2n$, which plays an important role in our proof of Theorem \ref{thm:main}.

Our proof utilizes the equivalence between the Szeg\"o cocycle, the Gesztesy--Zinchenko cocycle and the standard quasi-periodic cocycle in Sections \ref{sec:Sze}, \ref{sec:GZ} and \ref{sec:qs_cocycle}. 
The equivalence between the Gesztesy--Zinchenko cocycle and the Szeg\"o cocycle can be found in \cite{DFLY}. 
To the best of our knowledge, the equivalence between the quasi-periodic cocycle and the Szeg\"o or Gesztesy--Zinchenko cocycle is missing in the literature. We prove it in Section \ref{sec:equiv}. 
In our proof of Theorem \ref{thm:main} we use this equivalence to link the Lyapunov exponent of the standard quasi-periodic cocycle, which was computed recently in \cite{CFO}, to the Lyapunov exponent of the Szeg\"o cocycle, which is closely related to the Green's function and Anderson localization.
We point out that it is possible to directly compute the Lyapunov exponent of the Szeg\"o cocycle directly using Avila's global theory \cite{global}.

The rest of the paper is organized as follows: Section 2 serves as preliminary, Section 3 contains the key Lemma \ref{lem:main}, and Section 4 is devoted to the proof of Theorem \ref{thm:main}.

\section*{Acknowledgement}
This research is partially supported by the AMS Simons Travel grants. I would like to thank Christopher Cedzich and Jake Fillman for useful comments on an earlier version of this paper.

\section{Preliminary}
Let $\mathbb D$ be the unit disk in $\C$ and $\partial \mathbb D$ be the unit circle.
For an matrix $A$, let $A|_{[a,b]}:=\chi_{[a,b]}\, A\, \chi_{[a,b]}$ be the restriction of the matrix to the box $[a,b]$.
Let $\mathcal{L}$ and $\mathcal{M}$ be defined as in \eqref{def:LM} with generally defined $\alpha_n$, and $W=\mathcal{L}\mathcal{M}$. 
Then there holds that $W|_{[a,b]}=\mathcal{L}|_{[a,b]} \mathcal{M}|_{[a,b]}$, see \cite{Kruger}.

\subsection{Orthogonal polynomials and Szeg\"o cocycle}\label{sec:Sze}
Let $\mu$ be a probability measure on $\partial\mathbb D$, supported on an infinite subset of $\partial \D$. 
Let $\Phi_n$ be the monic polynomial of degree $n$ \linebreak (in $z$) such that
\begin{align*}
\int_{\partial \D} \Phi_n(z)\Phi_m(z)\, dz=\delta_{nm}, \text{ for any } m,n\in \N.
\end{align*}
The polynomials $\{\Phi_n\}_{n\in \N}$ are called the orthogonal polynomials on the unit circle with respect to $\mu$.
These polynomials satisfy the following relation, called Szeg\"o recurrence, see \cite{SOPUC},
\begin{align*}
\Phi_{n+1}(z)=&z\Phi_n(z)-\ol{\alpha}_n \Phi_n^*(z)\\
\Phi_{n+1}^*(z)=&\Phi_n^*(z)-\alpha_n z\Phi_n(z),
\end{align*}
where $\Phi_k^*(z)=z^k \ol{\Phi_k(1/\ol{z})}$. The $\alpha_n$'s above are called Verbluncky coefficients.
The following theorem is well-known.
\begin{theorem}[Verblunsky's theorem, see \cite{SOPUC}]
There is a bijection between probability measure supported on an infinite subset of $\partial \D$ and $\{\alpha_n\}_{n\in \N}\subset \D$.
\end{theorem}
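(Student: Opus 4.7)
The plan is to establish the bijection by constructing maps in both directions and showing they are mutual inverses.

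For the forward map $\mu \mapsto \{\alpha_n\}$, the key input is that $\mu$ is supported on an infinite set, so the monomials $1, z, z^2, \ldots$ are linearly independent in $L^2(\partial\D, d\mu)$. Applying Gram--Schmidt in this Hilbert space yields monic polynomials $\Phi_n$ of exact degree $n$, orthogonal with respect to $\mu$. To extract $\alpha_n$, I would observe that $z\Phi_n(z)$ has degree $n+1$, so $z\Phi_n - \Phi_{n+1}$ has degree $\leq n$; testing orthogonality against $\Phi_0,\ldots,\Phi_{n-1}$ and using that $\Phi_k^*$ spans the orthogonal complement of $\{z, z^2, \ldots, z^k\}$ in the degree-$k$ polynomials forces $z\Phi_n - \Phi_{n+1}$ to be a scalar multiple of $\Phi_n^*$. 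Define $\alpha_n$ as that scalar (with Simon's sign convention); this gives the Szeg\H{o} recurrence stated in the paper. The bound $|\alpha_n|<1$ follows from computing $\|\Phi_{n+1}\|_{L^2(\mu)}^2 = (1-|\alpha_n|^2)\|\Phi_n\|_{L^2(\mu)}^2$ by squaring the recurrence and using $\langle \Phi_n, \Phi_n^*\rangle_\mu = 0$; infinite support of $\mu$ forces $\|\Phi_{n+1}\|_{L^2(\mu)}>0$.

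For the inverse map $\{\alpha_n\} \mapsto \mu$, I would use the CMV construction (this is natural given the paper's framework): form the CMV matrix $\mathcal{C} = \mathcal{L}\mathcal{M}$ from the $\Theta_n$ in \eqref{def:Theta} with real-valued $\rho_n = \sqrt{1-|\alpha_n|^2}$, which is a unitary operator on $\ell^2(\N)$, and take $\mu$ to be the spectral measure at $\delta_0$, i.e.\ $\int z^n\, d\mu(z) = \langle \delta_0, \mathcal{C}^n \delta_0\rangle$. Since $\mathcal{C}$ has no eigenvalues of finite multiplicity arising from the recurrence (because $|\alpha_n|<1$ keeps the recurrence nondegenerate at every step), $\delta_0$ is a cyclic vector and the resulting $\mu$ has infinite support. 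An equivalent and often cleaner route is the Schur algorithm, which builds a Schur function $f\colon \D\to\ol{\D}$ from $\{\alpha_n\}$ and then recovers $\mu$ from the Carath\'eodory function $F(z) = (1+zf(z))/(1-zf(z))$ by Herglotz representation.

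To verify the two maps are inverse, one direction $\{\alpha_n\} \to \mu \to \{\alpha_n'\}$ follows by induction: compute the orthogonal polynomials for the reconstructed $\mu$ via the CMV matrix action and check they satisfy the Szeg\H{o} recurrence with the original $\alpha_n$. The other direction $\mu \to \{\alpha_n\} \to \mu'$ reduces to showing $\mu = \mu'$, which is the determinacy of the trigonometric moment problem on $\partial\D$: the moments $\int z^n\,d\mu$ are determined by $\{\alpha_n\}$ through the $\Phi_n$'s, and two probability measures on the compact set $\partial\D$ with equal moments coincide by Stone--Weierstrass.

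The main obstacle is the surjectivity half, i.e.\ showing that every sequence $\{\alpha_n\}\subset \D$ actually arises from some measure. The Gram--Schmidt construction is transparent but runs only one way; the converse requires producing a genuine unitary (hence a spectral measure) out of abstract coefficients. The CMV construction handles this cleanly because unitarity of each $\Theta_n$ depends only on $|\alpha_n|^2+\rho_n^2=1$ with $\rho_n>0$, which is guaranteed by $|\alpha_n|<1$; the Schur-algorithm route instead needs the Schwarz--Pick lemma to control the iterated Schur functions inside $\ol{\D}$. Either way, the nontrivial content is identifying the produced measure as the one whose Verblunsky coefficients are the original sequence, which is ultimately bookkeeping between the CMV/Schur recursion and the Szeg\H{o} recursion.
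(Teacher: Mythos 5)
The paper does not prove this statement; it is cited as a classical result from Simon's book \cite{SOPUC}, so there is no internal proof to compare against. Your outline follows the standard textbook route: Gram--Schmidt together with the Szeg\"o recurrence produces the map $\mu\mapsto\{\alpha_n\}$, the half-line CMV matrix (or the Schur algorithm) furnishes the inverse, and determinacy of the trigonometric moment problem on the compact set $\partial\D$ closes the loop. The architecture is sound.

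There is, however, one concrete error in your derivation of $|\alpha_n|<1$: the claim $\langle\Phi_n,\Phi_n^*\rangle_\mu=0$ is false in general. For instance, with $d\mu=(1+\cos\theta)\,\frac{d\theta}{2\pi}$ one finds $\Phi_1(z)=z-\frac{1}{2}$, $\Phi_1^*(z)=1-\frac{z}{2}$, and a direct computation gives $\langle\Phi_1,\Phi_1^*\rangle_\mu=-\frac{3}{8}\neq 0$. The fact you actually need is $\Phi_{n+1}\perp\Phi_n^*$, which holds simply because $\deg\Phi_n^*\leq n$ while $\Phi_{n+1}$ is orthogonal to every polynomial of degree at most $n$. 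From $0=\langle\Phi_n^*,\Phi_{n+1}\rangle=\langle\Phi_n^*,z\Phi_n\rangle-\overline{\alpha}_n\|\Phi_n^*\|^2$ one reads off $\langle\Phi_n^*,z\Phi_n\rangle=\overline{\alpha}_n\|\Phi_n\|^2$, and then, using $\Phi_{n+1}\perp\Phi_n^*$ once more, $\|\Phi_{n+1}\|^2=\langle\Phi_{n+1},z\Phi_n\rangle=\|\Phi_n\|^2-\alpha_n\langle\Phi_n^*,z\Phi_n\rangle=(1-|\alpha_n|^2)\|\Phi_n\|^2$. A related small inaccuracy: to pin down $z\Phi_n-\Phi_{n+1}$ as a multiple of $\Phi_n^*$ you should test orthogonality against $z,\ldots,z^n$, not against $\Phi_0,\ldots,\Phi_{n-1}$; the latter tests would instead characterize a multiple of $\Phi_n$. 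With these repairs your sketch is correct.
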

Let $\phi_n(z):=\Phi_n(z)/\|\Phi_n(z)\|$ and $\phi_n^*(z):=\Phi_n^*(z)/\|\Phi_n^*(z)\|$.
\begin{align*}
\|\Phi_n(z)\|=|\rho_{n-1}\cdots \rho_0|.
\end{align*}
Hence
\begin{align*}
|\rho_n|\phi_{n+1}(z)=&z\phi_n(z)-\ol{\alpha}_n \phi_n^*(z)\\
|\rho_n|\phi_{n+1}^*(z)=&\phi_n^*(z)-\alpha_n z\phi_n(z).
\end{align*}
Rewriting it in the following way
\begin{align*}
\left(\begin{matrix}
\phi_{n+1}(z)\\
\phi_{n+1}^*(z)
\end{matrix}\right)
=\frac{1}{|\rho_n|}
\left(\begin{matrix}
z\ \ &-\ol{\alpha}_n\\
-\alpha_n z &1
\end{matrix}\right)
\left(\begin{matrix}
\phi_n(z)\\
\phi_n^*(z)
\end{matrix}\right)
=:S_{n,z}
\left(\begin{matrix}
\phi_n(z)\\
\phi_n^*(z)
\end{matrix}\right).
\end{align*}
It is more convenient to consider the normalized two step Szeg\"o matrix:
\begin{align}\label{eq:SS}
\frac{1}{z}S_{n+1,z}S_{n,z}=\frac{1}{|\rho_{n+1}\rho_n|}
\left(\begin{matrix}
\alpha_n \ol{\alpha}_{n+1}+z &-\ol{\alpha}_n-\ol{\alpha}_{n+1}z^{-1}\\
-\alpha_n -\alpha_{n+1}z &\ol{\alpha}_n\alpha_{n+1}+z^{-1}
\end{matrix}\right).
\end{align}
In our setting, let
\begin{align}\label{def:tS}
\tilde{S}(\theta,z):=\frac{1}{z}S_{1,z}S_{0,z}.
\end{align}
We call the cocycle $(\omega, \tilde{S}(\cdot,z))$ the two step Szeg\"o cocycle, it acts on $\T\times \C^2$ in the following way:
\begin{align*}
(\omega, \tilde{S}(\cdot,z)):\ (\theta, v)\mapsto (\theta+\omega, \tilde{S}(\theta,z) v).
\end{align*}
Let $\tilde{S}_{n}(\theta,z):=\tilde{S}(\theta+(n-1)\omega,z)\cdots \tilde{S}(\theta,z)$.
We define the Lyapunov exponent of the two step Szeg\"o cocycle,
\begin{align}\label{def:LS}
L(\omega, \tilde{S}(\cdot,z)):=\lim_{n\to\infty} \frac{1}{n} \int \ln \|\tilde{S}_n(\theta,z)\|\, d\theta.
\end{align}
This limit exist due to Kingman's subadditive ergodic theorem.

\subsection{Gesztesy--Zinchenko cocycle}\label{sec:GZ}
Let $W=\mathcal{L}\mathcal{M}$ be a generalized extended CMV matrix, defined by the Verblunsky coefficients $\alpha_n$.
Suppose $u,v$ are defined as 
\begin{align}\label{def:uv}
Wu=zu, \text{ and } v=\mathcal{L}^{-1}u.
\end{align}
The $(u_n, v_n)^T$ obeys the Gesztesy-Zinchenko iterations \cite{GZ}:
\begin{align}\label{eq:GZ_transfer}
\left(\begin{matrix}
u_{n+1}\\
v_{n+1}
\end{matrix}\right)=
M_{n,z}
\left(\begin{matrix}
u_{n}\\
v_{n}
\end{matrix}\right),
\end{align}
where 
\begin{align*}
M_{n,z}=\frac{1}{\rho_n}
\begin{cases}
\left(\begin{matrix}
-\ol{\alpha}_n & z\\
z^{-1} &-\alpha_n
\end{matrix}\right) \text{ if } n \text{ is odd}\\
\\
\left(\begin{matrix}
-\alpha_n & 1\\
1 &-\ol{\alpha}_n
\end{matrix}\right) \text{ if } n \text{ is even}
\end{cases}
\end{align*}

The following equivalence can be found in \cite{DFLY}: for odd $n$, 
\begin{align}\label{eq:MM=SS}
M_{n+1,z}M_{n,z}=&\frac{1}{\rho_{n+1}\rho_n}\left(\begin{matrix}
\ol{\alpha}_n \alpha_{n+1}+z^{-1} &-\alpha_n-\alpha_{n+1} z\\
-\ol{\alpha}_n-\ol{\alpha}_{n+1} z^{-1} &\alpha_n\ol{\alpha}_{n+1}+z
\end{matrix}\right)\\
=&\frac{|\rho_n \rho_{n+1}|}{z \rho_n \rho_{n+1}} R^{-1} S_{n+1,z} S_{n,z} R,
\end{align}
where 
\begin{align*}
R=\left(\begin{matrix}
0 &1\\
1 &0
\end{matrix}\right).
\end{align*}
In our setting, we denote
\begin{align*}
\tilde{M}(\theta,z):=M_{2,z}M_{1,z}.
\end{align*}

\subsection{Standard quasi-periodic cocycle}\label{sec:qs_cocycle}
Let $u$ be a solution to CMV eigenvalue equation $Wu=zu$. 
We can rewrite this equation in the following way:
\begin{align}\label{def:A}
\left(\begin{matrix}
u_{2n+1}\\
u_{2n}
\end{matrix}\right)
=A_{n,z}
\left(\begin{matrix}
u_{2n-1}\\
u_{2n-2}
\end{matrix}\right),
\end{align}
where
\begin{align*}
A_{n,z}=\frac{1}{\rho_{2n}\rho_{2n-1}}\left(\begin{matrix}
z^{-1}+\alpha_{2n}\ol{\alpha}_{2n-1}+\alpha_{2n-1}\ol{\alpha}_{2n-2}+\alpha_{2n}\ol{\alpha}_{2n-2}z\ \ \ & -\ol{\rho}_{2n-2}\alpha_{2n-1}-\ol{\rho}_{2n-2}\alpha_{2n}z\\
-\rho_{2n}\ol{\alpha}_{2n-1}-\rho_{2n}\ol{\alpha}_{2n-2}z &\rho_{2n}\ol{\rho}_{2n-2}z
\end{matrix}\right)
\end{align*}
For our $\alpha_n$ in \eqref{def:alpha_n}, let 
\begin{align*}
A(\theta,z):=A_{0,z}.
\end{align*}

\subsection{Lyapunov exponent of UAMO}
\begin{theorem}[Theorem 2.7 of \cite{CFO}]\label{thm:LE}
For $z\in \sigma(W_{\lambda_1,\lambda_2,\omega,\theta})$, we have
\begin{align*}
L(\omega, A(\cdot, z))=\max\left(0, \ln \frac{\lambda_2(1+\lambda_1')}{\lambda_1(1+\lambda_2')}\right).
\end{align*}
\end{theorem}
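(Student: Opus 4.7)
The plan is to apply Avila's global theory of one-frequency analytic cocycles \cite{global} to a suitable renormalization of $A(\cdot, z)$. Writing all $\theta$-dependent quantities in terms of $y := e^{2\pi i(\theta-\omega)}$, each entry of $A(\theta, z)$ is a Laurent polynomial in $y$ of degree at most $1$, and the scalar factor $1/(\lambda_1 \rho_{-1})$ has $\rho_{-1}$ a degree-$1$ Laurent polynomial. Clearing this denominator by multiplying $A$ by the analytic scalar $y \rho_{-1}$ --- a quadratic polynomial in $y$ factoring as $\tfrac{\lambda_2}{2}(y - y_+)(y - y_-)$ with roots of moduli $\lambda_2/(1+\lambda_2')$ and $(1+\lambda_2')/\lambda_2$ --- and then rescaling to unit determinant yields an analytic $SL(2,\C)$ cocycle $B(\theta, z)$. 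Jensen's formula gives $\int_{\T} \log|\rho_{-1}|\, d\theta = \log\tfrac{1+\lambda_2'}{2}$, which relates $L(\omega, A(\cdot, z))$ to $L(\omega, B(\cdot, z))$ by an explicit constant depending only on $\lambda_1, \lambda_2$.

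Next, set $L_\varepsilon(z) := L(\omega, B(\cdot + i\varepsilon, z))$. By Avila's theorem, $\varepsilon \mapsto L_\varepsilon(z)$ is convex, piecewise affine, continuous, with non-negative integer right-derivative (the acceleration) at every point. Substituting $y \mapsto e^{2\pi i\theta}e^{-2\pi\varepsilon}$ and retaining the dominant monomial in each entry produces
\begin{equation}
L_\varepsilon(z) = 2\pi\varepsilon + c_\infty + o(1), \qquad \varepsilon \to +\infty,
\end{equation}
with asymptotic slope $1$ (from a Fourier-degree count on $B$) and $c_\infty$ an explicit constant depending only on $\lambda_1, \lambda_2$. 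A careful computation of $c_\infty$ --- involving the spectral structure of the $\varepsilon \to +\infty$ limit matrix, which introduces the rotation angle $\gamma_1 = \arcsin\lambda_1$ --- produces $c_\infty = \log\tfrac{\lambda_2(1+\lambda_1')}{\lambda_1(1+\lambda_2')}$ after undoing the $A$-to-$B$ corrections and using $1 + \cos\gamma_1 = 2\cos^2(\gamma_1/2)$.

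For $z \in \sigma(W_{\lambda_1,\lambda_2,\omega,\theta})$, the cocycle $A(\cdot, z)$ is not uniformly hyperbolic, and Avila's classification forces the acceleration of $L_\varepsilon(z)$ at $\varepsilon = 0^+$ to be a non-negative integer bounded above by the asymptotic slope $1$. If the acceleration is $0$, the cocycle is subcritical and $L_0(z) = 0$; if it is $1$, convexity and integer quantization force $L_\varepsilon(z) = 2\pi\varepsilon + c_\infty$ for all $\varepsilon \ge 0$, giving $L_0(z) = c_\infty = \log\tfrac{\lambda_2(1+\lambda_1')}{\lambda_1(1+\lambda_2')}$. Since $c_\infty > 0$ precisely when $\lambda_1 < \lambda_2$, the two cases combine into $L(\omega, A(\cdot, z)) = \max\!\left(0, \log\tfrac{\lambda_2(1+\lambda_1')}{\lambda_1(1+\lambda_2')}\right)$.

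The main obstacle is the explicit evaluation of $c_\infty$, particularly the emergence of the $(1+\lambda_1')$-factor: $\lambda_1$ enters $A$ only through constant entries, so this factor does \emph{not} arise from a Jensen-type computation (as the $(1+\lambda_2')$-factor does) but rather from the spectral radius of the $\varepsilon \to +\infty$ limit matrix. Careful bookkeeping through the $B$-renormalization, the determinant correction, and the limit-matrix eigenvalue computation is essential; once these are in place, the rest is a routine application of Avila's framework.
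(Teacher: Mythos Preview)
The paper does not prove this statement at all; it is quoted verbatim as Theorem 2.7 of \cite{CFO} and used as a black box. So there is no ``paper's own proof'' to compare against. The paper does remark, in the introduction, that ``it is possible to directly compute the Lyapunov exponent of the Szeg\H{o} cocycle directly using Avila's global theory \cite{global}'', which is exactly the route you propose --- and it is, in fact, the method used in \cite{CFO}.

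Your outline is essentially correct and follows the standard Avila global-theory template. Two points deserve more care. First, the step ``if the acceleration is $0$, the cocycle is subcritical and $L_0(z)=0$'' is not automatic: acceleration $0$ at $\varepsilon=0^+$ only says $L_\varepsilon$ is constant on some $[0,\delta]$, and Avila's regularity/UH dichotomy requires affineness in a \emph{two-sided} neighborhood of $0$. You should either compute the $\varepsilon\to-\infty$ asymptotic as well (the $\sin(2\pi\theta)$ dependence gives a $y\leftrightarrow y^{-1}$ symmetry that makes $L_\varepsilon$ even, so this is cheap), or invoke that symmetry directly. Once $L_\varepsilon$ is even, convexity plus asymptotic slope $1$ in both directions and the spectrum/non-UH hypothesis force $L_0=\max(0,c_\infty)$ cleanly. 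Second, you assert the value of $c_\infty$ without actually computing it; since this is the entire content of the formula, the eigenvalue computation for the $\varepsilon\to+\infty$ limit matrix (and the bookkeeping through the determinant normalization and the Jensen integral for $\rho_{-1}$) should be written out explicitly --- this is where the $(1+\lambda_1')$ factor you flag as the ``main obstacle'' must actually emerge.
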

One should note that the Lyapunov exponent is independent of $z$, and is positive iff $0\leq \lambda_1<\lambda_2\leq 1$.

The following integral was computed in \cite{CFO} as well.
\begin{align}\label{eq:int_rho}
\int_{\T}\ln |\lambda_1(\lambda_2\cos(2\pi\theta)+i\lambda_2')|\, d\theta=\ln\frac{\lambda_1(1+\lambda_2')}{2}.
\end{align}

The following control of the norm of the transfer matrix of a continuous cocycle by the Lyapunov exponent is well known.
\begin{lemma}\label{lem:upperbounds}$\mathrm{(}$e.g. \cite{Furman, JMavi}$\mathrm{)}$
Let $(\omega, D)$ be a continuous cocycle, then for any $\varepsilon>0$, for $|k|$ large enough,
\begin{align*}
\|D_k(\theta)\|\leq e^{|k|(L(\omega, D)+\varepsilon)}\ \mathrm{for}\ \mathrm{any}\ \theta\in\T,
\end{align*}
where $D_k(\theta):=\prod_{j=k-1}^0 M(\theta+j\omega)$.
\end{lemma}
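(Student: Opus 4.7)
The plan is to upgrade the mean-value definition of $L(\omega,D)$ to a uniform pointwise upper bound by combining subadditivity with unique ergodicity of the irrational rotation on $\T$ (which is available since $\omega$ in our setting is Diophantine, hence irrational). Set $f_k(\theta):=\log\|D_k(\theta)\|$. Submultiplicativity of the operator norm together with the cocycle identity $D_{k+m}(\theta)=D_m(\theta+k\omega)\,D_k(\theta)$ yields the subadditive relation
\[
f_{k+m}(\theta)\leq f_k(\theta)+f_m(\theta+k\omega).
\]
Continuity of $D$ and compactness of $\T$ make each $f_k$ continuous and bounded, so $a_k:=\max_{\theta\in\T}f_k(\theta)$ is finite; the bound $a_{k+m}\leq a_k+a_m$ plus Fekete's lemma then gives $a_k/k\to L_\infty:=\inf_{k\geq 1} a_k/k$.

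Next I would identify $L_\infty$ with $L(\omega,D)$. One direction is immediate: integrating in $\theta$ gives $\int f_k\,d\theta\leq a_k$, hence $L(\omega,D)\leq L_\infty$. For the reverse, fix $\varepsilon>0$ and pick $K$ with $\int f_K\,d\theta\leq K(L(\omega,D)+\varepsilon/2)$. For $n=qK+r$ with $0\leq r<K$, iterating the subadditive relation produces
\[
f_n(\theta)\leq \sum_{j=0}^{q-1} f_K(\theta+jK\omega)+f_r(\theta+qK\omega).
\]
Because $K\omega$ is irrational, rotation by $K\omega$ on $\T$ is uniquely ergodic, so the Birkhoff averages of the continuous function $f_K$ converge to $\int f_K$ uniformly in $\theta$. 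Dividing by $n$ and letting $n\to\infty$ yields $\limsup_n a_n/n\leq L(\omega,D)+\varepsilon$; arbitrariness of $\varepsilon$ gives $L_\infty=L(\omega,D)$. Hence for every $\varepsilon>0$ there exists $K_0$ with $a_k\leq k(L(\omega,D)+\varepsilon)$ for all $k\geq K_0$, which is exactly the desired bound for $k>0$ large.

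For $k<0$ the analogous estimate follows by applying the same argument to the inverse cocycle $(\omega,D(\cdot-\omega)^{-1})$; continuity of $D$ on the compact $\T$, together with the invertibility that holds in the settings where we apply the lemma, ensures $\|D^{-1}(\cdot)\|$ is uniformly bounded on $\T$, so the subadditivity/unique-ergodicity machinery transfers verbatim. The only genuinely nontrivial step is the passage from the integrated Lyapunov exponent to a uniform pointwise upper bound; this is Furman's theorem for continuous subadditive cocycles over uniquely ergodic base dynamics, and the irrationality of $\omega$ (a fortiori satisfied in $\mathrm{DC}$) is precisely the structural input it requires.
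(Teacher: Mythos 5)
The paper supplies no proof of this lemma, merely citing Furman and Jitomirskaya--Mavi; your argument is exactly the Furman-style derivation those references give (subadditivity plus Fekete for $a_k:=\sup_\theta\log\|D_k(\theta)\|$, identification $\inf_k a_k/k = L(\omega,D)$ via the block decomposition $n=qK+r$, and unique ergodicity of the irrational rotation to make the Birkhoff averages of the continuous function $\log\|D_K\|$ converge uniformly), and it is correct. Two small remarks: continuity of $\log\|D_K\|$ requires $D_K(\theta)$ to be nowhere singular (true for the two-step Szeg\H{o} cocycle, which has unit determinant), and the $|k|$ with $k<0$ half needs both invertibility and the equality of forward and backward exponents, which you flag and which again holds here by unimodularity; in the paper only the $k>0$ direction is actually invoked.
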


\begin{remark}
Considering 1-dimensional continuous cocycles, a corollary of Lemma \ref{lem:upperbounds} is that if $g$
is a continuous function such that $\ln |g| \in L^1(\T)$, 
then for any $\varepsilon > 0$, and $b-a$ sufficiently large, 
\begin{align}\label{eq:scalar_upper}
|\prod_{j=a}^b g (\theta+j\omega) | \leq e^{(b-a+1)(\int_{\T} \ln|g|\, d\theta+\varepsilon)}.
\end{align}
\end{remark}

\subsection{Continued fraction}\label{seccontinued}
Let $\omega\in \T\setminus \mathbb Q$, $\omega$ has the following unique expression with $a_n\in \N$: 
\begin{align*}
\omega=\frac{1}{a_1+\frac{1}{a_2+\frac{1}{a_3+\cdots}}}.
\end{align*}
Let 
\begin{align}\label{defpnqn}
\frac{p_n}{q_n}=\frac{1}{a_1+\frac{1}{a_2+\frac{1}{\cdots+\frac{1}{a_n}}}}
\end{align}
be the continued fraction approximants of $\omega$. 


\subsection{Trigonometric product}

The following lemma from \cite{AJ1} gives a useful estimate of products appearing in our analysis.
\begin{lemma} \label{lana}
Let $\omega\in \R\setminus \Q $, $\theta\in\R$ and $0\leq j_0 \leq q_{n}-1$ be such that 
$$|\cos \pi(\theta+j_{0}\omega)|=\inf_{0\leq j \leq q_{n}-1} |\cos \pi(\theta+j\omega)|,$$
then for some absolute constant $C$,
$$-C\ln q_{n} \leq \sum_{j=0,\, j\neq j_0}^{q_{n}-1} \ln |\cos \pi (\theta+j\omega)|+(q_{n}-1)\ln2 \leq C\ln q_n$$
\end{lemma}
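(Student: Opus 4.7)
The plan is to prove this by reducing to a rational rotation via continued fraction approximation. Using $|\cos\pi x|=|\sin\pi(x+\tfrac{1}{2})|$ and setting $\phi:=\theta+\tfrac{1}{2}$, the claim is equivalent to
\[
\left|\sum_{j=0,\,j\neq j_0}^{q_n-1}\ln|2\sin\pi(\phi+j\omega)|\right|\leq C\ln q_n,
\]
where $j_0$ now indexes the minimum of $|\sin\pi(\phi+j\omega)|$ over $0\leq j<q_n$. In this reformulation the lemma becomes a purely equidistribution-theoretic statement about the rotation orbit $\{\phi+j\omega\}_{j=0}^{q_n-1}$, and the factor $(q_n-1)\ln 2$ has been absorbed.

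The key input for the rational case is the factorization $z^{q_n}-1=\prod_{k=0}^{q_n-1}(z-e^{2\pi ik/q_n})$, which, evaluated at $z=e^{2\pi i\phi}$, yields the identity $\prod_{k=0}^{q_n-1}|2\sin\pi(\phi+k/q_n)|=|2\sin\pi q_n\phi|$. Because $\gcd(p_n,q_n)=1$, the set $\{jp_n/q_n\bmod 1:0\leq j<q_n\}$ equals $\{k/q_n:0\leq k<q_n\}$, so the same identity holds for the shifted orbit $\{\phi+jp_n/q_n\}$. I would then isolate the smallest term, indexed by some $k_0^{\mathrm{rat}}$, and use the estimates $\min_k|2\sin\pi(\phi+k/q_n)|\asymp \|q_n\phi\|_\T/q_n$ and $|2\sin\pi q_n\phi|\asymp \|q_n\phi\|_\T$. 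Taking logs of the identity and subtracting the minimum term, the singular factor $\ln\|q_n\phi\|_\T$ cancels, leaving a quantity of size $O(\ln q_n)$; this yields the rational version of both the upper and lower bounds.

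The transfer from the rational case to the irrational case exploits $|\omega-p_n/q_n|\leq 1/(q_nq_{n+1})$, which gives $|j(\omega-p_n/q_n)|\leq 1/q_{n+1}$ for $0\leq j<q_n$. For each $j$ such that $|\sin\pi(\phi+jp_n/q_n)|\geq c/q_n$, one has
\[
\frac{\sin\pi(\phi+j\omega)}{\sin\pi(\phi+jp_n/q_n)}=1+O\!\left(\frac{1}{q_{n+1}|\sin\pi(\phi+jp_n/q_n)|}\right).
\]
Summing the logarithm of this ratio over $j\neq j_0$, and using that the sorted magnitudes $|\sin\pi(\phi+kp_n/q_n)|$ grow like $|k-k_0^{\mathrm{rat}}|/q_n$ away from the rational minimum, the total perturbation is bounded by $(\pi/q_{n+1})\sum_{k\neq k_0^{\mathrm{rat}}}q_n/|k-k_0^{\mathrm{rat}}|\lesssim (q_n/q_{n+1})\ln q_n\leq \ln q_n$, which is absorbed into the final bound.

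The hard part will be the book-keeping near the minimum. First, the minimizers $j_0$ for the orbit at $\omega$ and $k_0^{\mathrm{rat}}$ for the orbit at $p_n/q_n$ need not coincide, so a matching argument is needed to pair them, possibly up to an adjacent index. Second and more delicately, if $\|q_n\phi\|_\T$ is extremely small, a small number of rational orbit points may lie within $O(1/q_{n+1})$ of $0$, for which the ratio estimate above fails outright. To handle this I would invoke the three-distance theorem: the orbit $\{\phi+j\omega\}_{j=0}^{q_n-1}$ has minimum pairwise gap $\geq c/q_n$, hence after removing the single minimizer $j_0$, every remaining point lies at distance $\geq c/(2q_n)$ from $0$ on the circle, so $|\sin\pi(\phi+j\omega)|\geq c'/q_n$ for all $j\neq j_0$. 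Combining this structural input with a direct, non-perturbative comparison of the at most $O(1)$ exceptional indices — where both sine values lie in a narrow window near $0$, so their ratio is controlled by elementary geometry — closes the argument and pins down the constant $C$.
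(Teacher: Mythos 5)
Your overall strategy --- reduce to the factorization identity $\prod_{k=0}^{q_n-1}|2\sin\pi(\phi+k/q_n)|=|2\sin\pi q_n\phi|$ for the rational approximant and then perturb --- is the right one, and indeed essentially the one used in the source \cite{AJ1} (which this paper cites without proof). The reformulation via $\phi=\theta+\tfrac12$, the equidistribution of $\{jp_n/q_n\}$, the $\big(q_n/q_{n+1}\big)\ln q_n$ bound on the generic perturbation sum, and the three--distance input that every non-minimizer $\phi+j\omega$ sits at distance $\gtrsim 1/q_n$ from $\Z$ are all correct and in the right order.

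The gap is in the final step, the treatment of the ``exceptional'' indices. You assert that for the $O(1)$ indices with $|\sin\pi(\phi+jp_n/q_n)|\lesssim 1/q_{n+1}$ one can do ``a direct, non-perturbative comparison $\ldots$ where both sine values lie in a narrow window near $0$, so their ratio is controlled by elementary geometry.'' That claim is false in exactly the regime that matters. Suppose $j_0\neq k_0^{\mathrm{rat}}$ (you correctly flag that this can happen, e.g.\ when $q_{n+1}$ is comparable to $q_n$). Consider $j=k_0^{\mathrm{rat}}$: by three--distance, $|\sin\pi(\phi+k_0^{\mathrm{rat}}\omega)|\gtrsim 1/q_n$ since $k_0^{\mathrm{rat}}\neq j_0$, while $|\sin\pi(\phi+k_0^{\mathrm{rat}}p_n/q_n)|\asymp \|q_n\phi\|_\T/q_n$, which is not bounded below. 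Thus
\begin{align}
\frac{|\sin\pi(\phi+k_0^{\mathrm{rat}}\omega)|}{|\sin\pi(\phi+k_0^{\mathrm{rat}}p_n/q_n)|}\ \gtrsim\ \frac{1}{\|q_n\phi\|_\T}\ \longrightarrow\ \infty
\end{align}
as $\|q_n\phi\|_\T\to 0$, so the per-term ratio is \emph{not} controlled, and neither is $\big|\ln|\sin\pi(\phi+k_0^{\mathrm{rat}}\omega)|-\ln|\sin\pi(\phi+k_0^{\mathrm{rat}}p_n/q_n)|\big|$. Similarly, your estimate ``$(\pi/q_{n+1})\sum_{k\neq k_0^{\mathrm{rat}}}q_n/|k-k_0^{\mathrm{rat}}|$'' silently replaces the sum over $j\neq j_0$ by the sum over $k\neq k_0^{\mathrm{rat}}$, which is only legitimate when the two minimizers coincide.

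The repair is to not work pointwise at $k_0^{\mathrm{rat}}$ but to let this divergence cancel against the rational baseline. Since
\begin{align}
\sum_{j=0}^{q_n-1}\ln|2\sin\pi(\phi+jp_n/q_n)|=\ln|2\sin\pi q_n\phi|=\ln\|q_n\phi\|_\T+O(1),
\end{align}
the full rational sum itself is unbounded below by exactly $\ln\|q_n\phi\|_\T$, and the ``divergent'' positive contribution $\ln\big(|\sin(\phi+k_0^{\mathrm{rat}}\omega)|/|\sin(\phi+k_0^{\mathrm{rat}}p_n/q_n)|\big)\sim -\ln\|q_n\phi\|_\T$ is precisely what restores boundedness when you pass from the rational sum to $\sum_{j\neq j_0}\ln|2\sin\pi(\phi+j\omega)|$. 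Concretely, write
\begin{align}
\sum_{j\neq j_0}\ln|2\sin\pi(\phi+j\omega)|
=\ln|2\sin\pi q_n\phi|-\ln|2\sin\pi(\phi+j_0p_n/q_n)|
+\sum_{j\neq j_0}\ln\frac{|\sin\pi(\phi+j\omega)|}{|\sin\pi(\phi+jp_n/q_n)|},
\end{align}
then split the last sum into $j=k_0^{\mathrm{rat}}$ (if $\neq j_0$), the $O(1)$ remaining bad indices with $d_j^{\mathrm{rat}}\lesssim 1/q_{n+1}$ (each contributes $O(\ln q_n)$ because, for $j\neq j_0,k_0^{\mathrm{rat}}$, both $d_j^\omega$ and $d_j^{\mathrm{rat}}$ lie in $[c/q_n,\,C/q_n]$), and the good indices (your $O((q_n/q_{n+1})\ln q_n)$ estimate). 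The $k_0^{\mathrm{rat}}$ term then pairs with the first two displayed terms so that $\ln\|q_n\phi\|_\T$ cancels identically, leaving $O(\ln q_n)$. Without this cancellation being made explicit, your argument as written does not yield a two-sided $C\ln q_n$ bound when $\|q_n\phi\|_\T$ is small and $j_0\neq k_0^{\mathrm{rat}}$.
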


\subsection{Lagrange interpolation theorem}
Let $P_{2n}(\theta)$ be a polynomial in $\sin(2\pi(\theta+\frac{n-1}{2}\omega))$ of degree at most $n$.
For any $\theta_1,\theta_2,...,\theta_{n+1}\in \T$, we have
\begin{align}\label{eq:Lagrange}
|P_{2n}(\theta)| = \left|\sum_{j=1}^{n+1} P_{2n}(\theta_j) \prod_{\substack{\ell=1\\ \ell \neq j}}^{n+1} \frac{z-\sin 2\pi (\theta_{\ell}+\frac{n-1}{2}\omega)}{(\sin 2\pi (\theta_j+\frac{n-1}{2}\omega)-\sin 2\pi (\theta_{\ell}+\frac{n-1}{2}\omega))}\right|.
\end{align}

\subsection{Uniformity}
\begin{definition}
We say that a set $\{\theta_1,\theta_2,...,\theta_{n+1}\}$ is $\kappa$-uniform if 
\begin{align*}
\max_{z\in [0,1]}\, \, \max_{j=1,\cdots,n+1} \left|\prod_{\substack{\ell=1\\ \ell \neq j}}^{n+1} \frac{z-\cos 2\pi (\theta_{\ell}+\frac{n-1}{2}\omega)}{(\cos 2\pi (\theta_j+\frac{n-1}{2}\omega)-\cos 2\pi (\theta_{\ell}+\frac{n-1}{2}\omega))}\right|<e^{\kappa n}.
\end{align*}
\end{definition}

\subsection{Green's function expansion}
Our definition of Green's function is slightly different from those in the literature \cite{Kruger}. In this section, we develop the estimates for the Green's function in our setting.

Let $P_{[a,b],z}:=\det(z-W|_{[a,b]})$.
Let 
\begin{align*}
\mathcal{A}_{[a,b],z}:=
z(\mathcal{L}|_{[a,b]})^{-1}-\mathcal{M}|_{[a,b]}, \text{ and } G_{[a,b],z}=(\mathcal{A}_{[a,b],z})^{-1},
\end{align*}
and
\begin{align*}
\tilde{\mathcal{A}}_{[a,b],z}:=z(\mathcal{M}|_{[a,b]})^{-1}-\mathcal{L}|_{[a,b]}, \text{ and } \tilde{G}_{[a,b],z}=(\tilde{\mathcal{A}}_{[a,b],z})^{-1}.
\end{align*}
Then it is clear that
\begin{align}\label{eq:P=detAdetL}
P_{[a,b],z}&=\det(\mathcal{L}|_{[a,b]}) \det(\mathcal{A}_{[a,b],z})\\
P_{[a,b],z}&=\det(\mathcal{M}|_{[a,b]}) \det(\tilde{\mathcal{A}}_{[a,b],z})
\end{align}
For odd $y\in [a,b]$, we have by Cramer's rule that
\begin{align}\label{eq:Cramer1}
\ \ |G_{[a,b],z}(y,a)|=&(\prod_{j=a}^{y-1}|\rho_j|) \frac{|\det(\mathcal{A}_{[y+1,b],z})|}{|\det(\mathcal{A}_{[a,b],z})|}=
(\prod_{j=a}^{y-1}|\rho_j|)\frac{|P_{[y+1,b],z}|}{|P_{[a,b],z}|} \frac{|\det(\mathcal{L}|_{[a,b]})|}{|\det(\mathcal{L}|_{[y+1,b]})|}, \text{ and }\\
|G_{[a,b],z}(y,b)|=&(\prod_{j=y}^{b-1}|\rho_j|)\frac{|\det(\mathcal{A}_{[a,y-1],z})|}{|\det(\mathcal{A}_{[a,b],z})|}=
(\prod_{j=y}^{b-1}|\rho_j|)\frac{|P_{[a,y-1],z}|}{|P_{[a,b],z}|} \frac{|\det(\mathcal{L}|_{[a,b]})|}{|\det(\mathcal{L}|_{[a,y-1]})|}. \notag
\end{align}
For even $y\in [a,b]$, similarly, we have
\begin{align}\label{eq:Cramer2}
|\tilde{G}_{[a,b],z}(y,a)|=&(\prod_{j=a}^{y-1}|\rho_j|)\frac{|\det(\tilde{\mathcal{A}}_{[y+1,b],z})|}{|\det(\tilde{\mathcal{A}}_{[a,b],z})|}
=(\prod_{j=a}^{y-1}|\rho_j|)\frac{|P_{[y+1,b],z}|}{|P_{[a,b],z}|}\frac{|\det(\mathcal{M}|_{[a,b]})|}{|\det(\mathcal{M}|_{[y+1,b]})|}, \text{ and }\\
|\tilde{G}_{[a,b],z}(y,b)|=&(\prod_{j=y}^{b-1}|)\rho_j|\frac{|\det(\tilde{\mathcal{A}}_{[a,y-1],z})|}{|\det(\tilde{\mathcal{A}}_{[a,b],z})|}
=(\prod_{j=y}^{b-1}|\rho_j|)\frac{|P_{[a,y-1],z}|}{|P_{[a,b],z}|} \frac{|\det(\mathcal{M}|_{[a,b]})|}{|\det(\mathcal{M}|_{[a,y-1]})|}. \notag
\end{align}

Let $\Psi$ be a solution to $W\Psi=z\Psi$, we have the following Poisson formula, for $y\in [a,b]$:
\begin{align}\label{eq:Poisson}
\Psi_y
=&-G_{[a,b],z}(y,a)\cdot \begin{cases}
z(\frac{1}{\alpha_{a-1}}-\ol{\alpha}_{a-1})\Psi_a+z\rho_{a-1}\Psi_{a-1},\ \ \ a \text{ odd}\\
-\ol{\rho}_{a-1}\Psi_{a-1}, \ \ \ \qquad\qquad\qquad\qquad\ \,  a \text{ even}
\end{cases}\\
&-G_{[a,b],z}(y,b) \cdot \begin{cases}
-\rho_b\Psi_{b+1}, \ \  \qquad\qquad\qquad\qquad\qquad b \text{ odd}\\
-z(\frac{1}{\ol{\alpha}_b}-\alpha_b)\Psi_b+z\ol{\rho}_b\Psi_{b+1}, \ \ \qquad \ \  b \text{ even}
\end{cases}
\end{align}

Combining \eqref{eq:Cramer1} with \eqref{eq:Poisson}, we have for odd $y$, for our $\alpha_n$ in \eqref{def:alpha_n} and $z\in \sigma(W_{\lambda_1,\lambda_2,\omega,\theta})$,
\begin{align}\label{eq:Poisson_1}
|\Psi_y|\lesssim_{\lambda_1,\lambda_2} 
&(\prod_{j=a}^{y-1} |\rho_j|) \frac{|P_{[y+1,b],z}|}{|P_{[a,b],z}|}\max(|\Psi_{a-1}|, |\Psi_a|)\\
&+(\prod_{j=y}^{b-1} |\rho_j|) \frac{|P_{[a,y-1],z}|}{|P_{[a,b],z}|}\max(|\Psi_b|, |\Psi_{b+1}|) \notag
\end{align}
One can also prove that \eqref{eq:Poisson_1} also holds for even $y$.

\subsection{Boundary Conditions}
\label{ss: Bdry}
For $\beta,\gamma\in \C$, define
\begin{align}\label{def:modify}
  \tilde{\alpha}_n=
  \begin{cases} \alpha_n,\quad &n\neq a-1,b\\
    \beta,\quad &n = a-1\\
    \gamma,\quad &n = b\\
  \end{cases}.
\end{align}
Let $\tilde{W}$ be the CMV matrix with Verblunsky coefficient $\tilde{a}_n$.
Let $W_{[a,b]}^{\beta,\gamma}=\chi_{[a,b]}\tilde{W}\chi_{[a,b]}$, and define $\mathcal{L}^{\beta,\gamma}_{[a,b]}$ and $\mathcal{M}^{\beta,\gamma}_{[a,b]}$ similarly.
Let
\begin{align}\label{eq:Wcdot}
W_{[a,b]}^{\beta,\cdot}:=W_{[a,b]}^{\beta,\alpha_b}, \text{ and } W_{[a,b]}^{\cdot,\gamma}:=W_{[a,b]}^{\alpha_{a-1},\gamma},
\end{align}
be the operator with only one-sided boundary condition. Define \eqref{eq:Wcdot} similarly for $\mathcal{L}$ and $\mathcal{M}$.
It is easy to check that $W_{[a,b]}^{\beta,\gamma}=\mathcal{L}_{[a,b]}^{\beta,\gamma}\mathcal{M}_{[a,b]}^{\beta,\gamma}$.

Let 
\begin{align*}
\mathcal{A}^{\beta,\gamma}_{[a,b],z}:=
(z (\mathcal{L}^{\beta,\gamma}_{[a,b]})^{-1}-\mathcal{M}^{\beta,\gamma}_{[a,b]}).
\end{align*} 
It is easy to check that it is a tri-diagonal matrix. Let
\begin{align*}
G^{\beta,\gamma}_{[a,b]}:=(\mathcal{A}^{\beta,\gamma}_{[a,b]})^{-1}
\end{align*}
be the Green's function.


Let $P_{[a,b]}^{\beta,\gamma}(\theta):= \det(z-W^{\beta,\gamma}_{[a,b]})$.
The following was proved in \cite{SOPUC}
\begin{align}\label{eq:Sze1}
S_{b,z}\cdots S_{a,z}=\frac{1}{2 \prod_{j=a}^b |\rho_j|} \left(\begin{matrix}
P_{[a,b],z}^{-1,\cdot}+P_{[a,b],z}^{1,\cdot}\ \ &P_{[a,b],z}^{-1,\cdot}-P_{[a,b],z}^{1,\cdot}\\
(P_{[a,b],z}^{-1,\cdot}-P_{[a,b],z}^{1,\cdot})^* &(P_{[a,b],z}^{-1,\cdot}+P_{[a,b],z}^{1,\cdot})^*
\end{matrix}\right).
\end{align}
An alternate expression was given in \cite{Wang}:
\begin{align}\label{eq:Sze2}
S_{b,z}\cdots S_{a,z}=\frac{1}{\prod_{j=a}^b |\rho_j|}
\left(\begin{matrix}
zP_{[a+1,b]} &\frac{zP_{[a+1,b],z}-P_{[a,b],z}}{\alpha_{a-1}}\\
z(\frac{zP_{[a+1,b],z}-P_{[a,b],z}}{\alpha_{a-1}})^*& (P_{[a+1,b]})^*
\end{matrix}\right).
\end{align}


\section{Equivalence between the Gesztesy--Zinchenko cocycle and the transfer matrix cocycle}\label{sec:equiv}
Since $v=\mathcal{L}^{-1}u$ as in \eqref{def:uv}, we have
\begin{align}\label{eq:tran_1}
\begin{cases}
v_{2n}=\alpha_{2n}u_{2n}+\rho_{2n}u_{2n+1}\\
v_{2n+1}=\ol{\rho}_{2n} u_{2n}-\ol{\alpha}_{2n}u_{2n+1}
\end{cases}
\end{align}
Hence by \eqref{eq:tran_1} and \eqref{eq:GZ_transfer},
\begin{align*}
\left(\begin{matrix}
1 &0\\
-\ol{\alpha}_{2n} &\ol{\rho}_{2n}
\end{matrix}\right)
\left(\begin{matrix}
u_{2n+1}\\
u_{2n}
\end{matrix}\right)=
\left(\begin{matrix}
u_{2n+1}\\
v_{2n+1}
\end{matrix}\right)=&M_{2n,z}M_{2n-1,z}
\left(\begin{matrix}
u_{2n-1}\\
v_{2n-1}
\end{matrix}\right)\\
=&M_{2n,z}M_{2n-1,z}
\left(\begin{matrix}
1 &0\\
-\ol{\alpha}_{2n-2} &\ol{\rho}_{2n-2}
\end{matrix}\right)\left(\begin{matrix}
u_{2n-1}\\
u_{2n-2}
\end{matrix}\right).
\end{align*}
Hence we have
\begin{align}\label{eq:A=MM}
A_{n,z}=B_n^{-1} M_{2n,z} M_{2n-1,z} B_{n-1},
\end{align}
where
\begin{align*}
B_n:=\left(\begin{matrix}
1 &0\\
-\ol{\alpha}_{2n} &\ol{\rho}_{2n}
\end{matrix}\right).
\end{align*}

Comparing \eqref{eq:MM=SS} with \eqref{eq:A=MM}, we see that the three cocycles $(\omega, \tilde{S})$, $(\omega, \tilde{M})$ and $(\omega, A)$ are conjugate to each other, in particular, their Lyapunov exponents are identical:
\begin{align}\label{eq:LA=LS=LM}
L(\omega, \tilde{S}(\cdot, z))\equiv L(\omega, \tilde{M}(\cdot, z))\equiv L(\omega, A(\cdot, z)).
\end{align}

\section{The key lemma}\label{sec:key}
For our $\alpha_n$ in \eqref{def:alpha_n}, we have that
\begin{lemma}\label{lem:poly1}
$P_{[1,2n],z}(\theta)$ is a polynomial of $\cos(2\pi\theta)$ and $\sin(2\pi\theta)$ of degree at most $n$.
\end{lemma}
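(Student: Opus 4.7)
The strategy is to express $P_{[1,2n],z}(\theta)$ via a product of $2$-step Szeg\"o matrices and exploit the reality of every Verblunsky coefficient in this model. Apply \eqref{eq:Sze2} with $a=0$, $b=2n$:
\begin{equation*}
z\,P_{[1,2n],z}(\theta)=\Big(\prod_{j=0}^{2n}|\rho_j|\Big)(S_{2n,z}\cdots S_{0,z})_{1,1}=\lambda_1^{n+1}\prod_{k=0}^{n-1}|\rho_{2k+1}|\cdot(S_{2n,z}\cdots S_{0,z})_{1,1},
\end{equation*}
using $|\rho_{2k}|=\lambda_1$. Group the $S$-factors in adjacent pairs: by \eqref{def:tS}, each $S_{2k+1,z}S_{2k,z}$ coincides with $z\,\tilde{S}(\theta+k\omega,z)$, so
\begin{equation*}
S_{2n,z}\cdots S_{0,z}=z^{n}\,S_{2n,z}\,\tilde{S}_n(\theta,z),\qquad S_{2n,z}=\lambda_1^{-1}\begin{pmatrix}z&-\lambda_1'\\-\lambda_1' z&1\end{pmatrix},
\end{equation*}
and note that $S_{2n,z}$ is $\theta$-independent.

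The structural observation is from \eqref{eq:SS}: since $\alpha_n\in\R$ for every $n$ in our model (with $\alpha_{2k}=\lambda_1'$ and $\alpha_{2k+1}=\lambda_2\sin(2\pi(\theta+k\omega))$), the matrix
\begin{equation*}
\tilde{M}(\theta,z):=\lambda_1|\rho_1(\theta)|\,\tilde{S}(\theta,z)
\end{equation*}
has all four entries \emph{linear} in $\sin(2\pi\theta)$, with coefficients in $\C[z,z^{-1}]$ and with \emph{no} $\cos(2\pi\theta)$ dependence at all: the $\rho$-coefficients enter \eqref{eq:SS} only through the scalar prefactor $|\rho_{n+1}\rho_n|^{-1}$, never inside the matrix. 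Consequently the product $\tilde{M}_n(\theta,z):=\prod_{k=0}^{n-1}\tilde{M}(\theta+k\omega,z)$ has entries that are polynomials in the $n$ real variables $\{\sin(2\pi(\theta+k\omega))\}_{k=0}^{n-1}$, each variable appearing to degree at most $1$.

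Combining these, the $|\rho_{2k+1}|$ factors in $\tilde{S}_n$ cancel exactly with the corresponding ones in the prefactor from \eqref{eq:Sze2}, leaving
\begin{equation*}
P_{[1,2n],z}(\theta)=z^{\,n-1}\bigl[z\,\tilde{M}_n(\theta,z)_{1,1}-\lambda_1'\,\tilde{M}_n(\theta,z)_{2,1}\bigr].
\end{equation*}
Expanding $\sin(2\pi(\theta+k\omega))=\cos(2\pi k\omega)\sin(2\pi\theta)+\sin(2\pi k\omega)\cos(2\pi\theta)$ substitutes each of the $n$ variables by a linear form in $\sin(2\pi\theta),\cos(2\pi\theta)$, which shows that $P_{[1,2n],z}(\theta)$ is a polynomial in $\sin(2\pi\theta),\cos(2\pi\theta)$ of total degree at most $n$. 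The main care lies in the bookkeeping of the $|\rho_{2k+1}|$ cancellations and in confirming the $\cos$-free structure of $\tilde{M}(\theta,z)$; this structural feature is what brings the naive bound $2n$ (the size of the determinant, with entries linear in $z$ and the trigonometric variables) down to the sharp $n$, and it is precisely what opens the door to the sharper Lemma \ref{lem:main}.
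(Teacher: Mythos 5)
Your proof is correct and follows essentially the same route as the paper: both invoke \eqref{eq:Sze2} to identify $P_{[1,2n],z}$ with the $(1,1)$ entry of a Szeg\"o product, pair up the $S$-factors via \eqref{def:tS}, observe from \eqref{eq:SS} that the matrix part of the two-step Szeg\"o cocycle (i.e.\ the paper's $D(\theta,z)$, your $\tilde{M}(\theta,z)$) depends on $\theta$ only through $\sin(2\pi\theta)$ linearly because the $\rho$'s appear solely in the scalar prefactor, and then expand $\sin(2\pi(\theta+k\omega))$ to get degree at most $n$ in $\sin(2\pi\theta),\cos(2\pi\theta)$. Your write-up is somewhat more careful about the $|\rho_{2k+1}|$ cancellations and the residual $\lambda_1$ constant that the paper silently absorbs; the one caveat is that you reuse the symbol $\tilde{M}$, which the paper reserves for the Gesztesy--Zinchenko two-step matrix, so a different letter would avoid ambiguity.
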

\begin{proof}

by \eqref{eq:SS} that,
\begin{align*}
\tilde{S}(\theta,z)=\frac{1}{z}S_{1,z}S_{0,z}
=&\frac{1}{|\rho_1\rho_0|}\left(\begin{matrix}
\lambda_1'\lambda_2\sin(2\pi\theta)+z &-\lambda_1'-\lambda_2\sin(2\pi\theta) z^{-1}\\
-\lambda_1'-\lambda_2\sin(2\pi\theta) z &\lambda_1'\lambda_2\sin(2\pi\theta)+z^{-1}
\end{matrix}\right)\\
=:&\frac{1}{|\rho_1\rho_0|}D(\theta,z).
\end{align*}
Combining this with \eqref{eq:Sze2}, we have that $P_{[1,2n],z}$ is the upper left corner of 
\begin{align}\label{eq:P_upper_left}
z^{n-1} S_{2n,z} \prod_{j=n-1}^{0} D(\theta+j\omega),
\end{align}
hence it is a polynomial in $\cos(2\pi\theta)$ and $\sin(2\pi\theta)$ of degree at most $n$, where we used that $S_{2n,z}$ is constant in $\theta$ and each entry of $D(\theta+j\omega)$ has degree at most $1$.
\end{proof}

\begin{lemma}\label{lem:poly2}
$P_{[1,2n],z}(\theta+\frac{1}{4}-\frac{n-1}{2}\omega)$ is an even function in $\theta$.
\end{lemma}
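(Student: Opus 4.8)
The plan is to leverage the explicit product formula \eqref{eq:P_upper_left} together with the symmetry of the matrix $D(\theta,z)$ under $\theta \mapsto -\theta$. First I would observe that $\sin(2\pi(\theta+\frac14)) = \cos(2\pi\theta)$, so the substitution $\theta \mapsto \theta + \frac14 - \frac{n-1}{2}\omega$ replaces $\sin(2\pi(\theta+j\omega))$ by $\sin(2\pi(\theta+\frac14+(j-\frac{n-1}{2})\omega)) = \cos(2\pi(\theta+(j-\frac{n-1}{2})\omega))$. As $j$ runs over $0,1,\dots,n-1$, the shifts $j - \frac{n-1}{2}$ run over the symmetric set $\{-\frac{n-1}{2}, -\frac{n-3}{2}, \dots, \frac{n-3}{2}, \frac{n-1}{2}\}$, which is invariant under negation. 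The key point is that each factor $D(\theta + j\omega)$ in \eqref{eq:P_upper_left}, after this substitution, depends on $\theta$ only through a single $\cos(2\pi(\theta+(j-\frac{n-1}{2})\omega))$, which is even in $\theta$ once we simultaneously reverse the order of the product (since $j \leftrightarrow n-1-j$ corresponds to negating $j - \frac{n-1}{2}$).

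Concretely, I would set $\eta_j := \cos(2\pi(\theta + (j-\tfrac{n-1}{2})\omega))$ so that under $\theta\mapsto -\theta$ we have $\eta_j \mapsto \eta_{n-1-j}$, and then note that each $D$ factor becomes a fixed matrix-valued function $E(\eta,z) := \begin{pmatrix} \lambda_1'\lambda_2\eta + z & -\lambda_1' - \lambda_2\eta z^{-1}\\ -\lambda_1' - \lambda_2\eta z & \lambda_1'\lambda_2\eta + z^{-1}\end{pmatrix}$ evaluated at $\eta = \eta_j$. So $P_{[1,2n],z}(\theta+\frac14-\frac{n-1}{2}\omega)$ is the upper-left entry of $z^{n-1}S_{2n,z}\, E(\eta_{n-1},z)E(\eta_{n-2},z)\cdots E(\eta_0,z)$. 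Under $\theta \mapsto -\theta$ this becomes the upper-left entry of $z^{n-1}S_{2n,z}\,E(\eta_0,z)E(\eta_1,z)\cdots E(\eta_{n-1},z)$ — the same product with the order of the $E$-factors reversed. The remaining task is to show these two expressions have the same upper-left entry. This is the step I expect to be the main obstacle: it is not true that reversing a matrix product preserves the $(1,1)$ entry in general, so one needs a genuine algebraic identity relating $E(\eta,z)$ and its behavior under transposition. The natural approach is to find a fixed matrix $J$ (independent of $\eta$, $j$) with $E(\eta,z)^T = J\, E(\eta,z)\, J^{-1}$ or $J^{-1}E(\eta,z)J$, together with compatible behavior of $S_{2n,z}$; then $\big(z^{n-1}S_{2n,z}\prod_j E(\eta_j,z)\big)^T$ rearranges, via transposing the product and inserting $J J^{-1}$ pairs, into a conjugate of the reversed product, and one reads off the $(1,1)$ entry using that transposition fixes diagonal entries. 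I would look for $J$ among antidiagonal or signature matrices; a quick computation of $E(\eta,z)^T$ shows it has the same diagonal as $E(\eta,z)$ and the off-diagonal entries swapped, which strongly suggests conjugation by a fixed $z$-dependent diagonal or by $R = \bigl(\begin{smallmatrix}0&1\\1&0\end{smallmatrix}\bigr)$ up to a $z \leftrightarrow z^{-1}$ issue — and since by the preliminaries (e.g. the structure in \eqref{eq:Sze1}–\eqref{eq:Sze2}) $P_{[1,2n],z}$ can also be accessed through the $*$-operation, the reversal symmetry should close.

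Alternatively, and perhaps more robustly, I would bypass the matrix-product manipulation and argue directly at the level of the CMV matrix: reversing the roles of the two endpoints of the block $[1,2n]$ corresponds to the order-reversal symmetry of the finite CMV matrix $W_{[1,2n]}$, and the substitution $\theta \mapsto \frac14 - \frac{n-1}{2}\omega - \theta$ maps the Verblunsky coefficients $\alpha_{2k+1} = \lambda_2\sin(2\pi(\theta+k\omega))$ to $\lambda_2\sin(2\pi(\tfrac14 + (\tfrac{n-1}{2}-k-... )\omega - ...)) $, i.e. to a reindexed (reversed) sequence of the same coefficients (using $\sin(\pi - x) = \sin x$ style identities and the evenness of $\cos$), while the constant coefficients $\alpha_{2k} = \lambda_1'$ are untouched. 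Since $\det(z - W_{[1,2n]})$ is invariant under the relabeling $k \mapsto (n-1)-k$ of the sites — i.e. under conjugating $W_{[1,2n]}$ by the flip permutation, which preserves the tri-diagonal CMV structure — the determinant is unchanged, giving the claimed evenness. The main obstacle in either route is the same: verifying that the flip/order-reversal genuinely maps the parametrized family to itself with the parameter negated, which requires carefully tracking the index bookkeeping in \eqref{def:alpha_n} and checking the two boundary Verblunsky coefficients play no role since $P_{[1,2n],z}$ uses the free (not $\beta,\gamma$-modified) operator. I would record this bookkeeping as a short computation and then invoke the determinant's invariance under conjugation to conclude.
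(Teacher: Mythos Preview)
Your strategic instinct is exactly right—the lemma is a flip/reversal symmetry, and the paper proves it by a direct matrix-conjugation argument on the block-tridiagonal representation. But neither of your two routes, as written, closes.

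In the transfer-matrix route, your expected identity $E(\eta,z)^T = J\,E(\eta,z)\,J^{-1}$ for a fixed $J$ fails: matching the off-diagonal entries of $E(\eta,z)^T$ forces $J$ to depend on $\eta$ (a diagonal $J=\mathrm{diag}(a,b)$ would need $a/b=1$ and $a/b=z^2$ simultaneously), and conjugation by $R=\bigl(\begin{smallmatrix}0&1\\1&0\end{smallmatrix}\bigr)$ gives $E(\eta,z^{-1})$, not $E(\eta,z)$. So reversing the $E$-product does not reduce to a simple transpose/conjugation trick; you would have to bring in the $*$-operation and the second column of \eqref{eq:Sze1}--\eqref{eq:Sze2}, which is substantially more work than you indicate.

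In the CMV-flip route, your claim that the substitution sends the Verblunsky data to a reversed copy of itself is only half true. After the shift, $\alpha_{2k+1}=\lambda_2\cos\bigl(2\pi(\theta+(k-\tfrac{n-1}{2})\omega)\bigr)$ indeed satisfies $\alpha_{2k+1}(-\theta)=\alpha_{2(n-1-k)+1}(\theta)$, but $\rho_{2k+1}=-\lambda_2\sin\bigl(2\pi(\theta+(k-\tfrac{n-1}{2})\omega)\bigr)+i\lambda_2'$ transforms as $\rho_{2k+1}(-\theta)=-\overline{\rho_{2(n-1-k)+1}(\theta)}$. The extra minus sign and complex conjugation mean a bare site-reversal permutation does \emph{not} conjugate $W_{[1,2n]}(\theta)$ to $W_{[1,2n]}(-\theta)$. (Also, $W=\mathcal{L}\mathcal{M}$ is five-diagonal, not tri-diagonal; it is $\tilde{\mathcal{A}}=z\mathcal{M}^{-1}-\mathcal{L}$ that is tri-diagonal.) The paper absorbs exactly these discrepancies by working with $R(\theta):=\tilde{\mathcal{A}}_{[1,2n],z}$, combining the block reversal $L_2$ with an in-block swap $L_1=\bigl(\begin{smallmatrix}0&1\\1&0\end{smallmatrix}\bigr)$ and alternating signs $L_3=\mathrm{diag}((-1)^kL_1)$, and checking the single $2\times 2$ identity $L_1B_k(\theta)L_1=-B_{-k}(-\theta)$ together with $L_1CL_1=C^T$. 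This yields $L_3L_2R(\theta)L_2L_3=-R(-\theta)$; the global minus sign is harmless because the matrix is $2n\times 2n$. Your sketch does not anticipate the need for $L_1$ and $L_3$, so as written the bookkeeping you propose would not close.
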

\begin{proof}
We have
\begin{align*}
P_{[1,2n],z}(\theta+\frac{1}{4}-\frac{n-1}{2}\omega)=\det(\mathcal{M}|_{[1,2n]}(\theta))\det(\tilde{\mathcal{A}}_{[1,2n],z}(\theta+\frac{1}{4}-\frac{n-1}{2}\omega)).
\end{align*}
Note that $\det(\mathcal{M}|_{[1,2n]}(\theta))\equiv 1$, hence it suffices to consider
\begin{align*}
\det(\tilde{\mathcal{A}}_{[1,2n],z}(\theta+\frac{1}{4}-\frac{n-1}{2}\omega))=:\det(R(\theta)).
\end{align*}
For $k\in \frac{1}{2}\Z$, denoting $\sin(2\pi(\theta+k\omega))=:s_k(\theta)$ and $\cos(2\pi(\theta+k\omega))=:c_k(\theta)$.
Let 
\begin{align*}
C:=
\left(\begin{matrix}
0\ \ &0\\
-\lambda_1 &0
\end{matrix}\right),
\end{align*}
and
\begin{align*}
B_k(\theta):=
\left(\begin{matrix}
z\lambda_2c_k(\theta)+\lambda_1'  &z(-\lambda_2 s_k(\theta)+i\lambda_2')\\
z(-\lambda_2 s_k(\theta)-i\lambda_2') &-z\lambda_2c_k(\theta)-\lambda_1'
\end{matrix}\right)
\end{align*}
Let
\begin{align*}
L_1:=\left(\begin{matrix}
0  & 1\\
1 &0
\end{matrix}\right).
\end{align*}
We note that 
\begin{align}\label{eq:L1BL1}
L_1B_k(\theta)L_1=&
\left(\begin{matrix}
-z\lambda_2c_k(\theta)-\lambda_1' &z(-\lambda_2 s_k(\theta)-i\lambda_2')\\
z(-\lambda_2 s_k(\theta)+i\lambda_2') &z\lambda_2c_k(\theta)+\lambda_1'
\end{matrix}\right)=-B_{-k}(-\theta).
\end{align}
Let 
\begin{align*}
L_2:=\left(\begin{matrix}
& & & & &I_2\\
& & & &I_2 &\\
& & & & &\\
& &I_2 & & &\\
&I_2 & & & &
\end{matrix}\right).
\end{align*}
Then it is clear that
\begin{align*}
L_2R(\theta)L_2=
\left(
\begin{matrix}
&B_{\frac{n-1}{2}}\ \ &C^T\ \ & & & & &\\
&C &B_{-1+\frac{n-1}{2}} &\ddots & & & &\\
& &\ddots &\ddots & & & &\\
& & & &  & B_{1-\frac{n-1}{2}} &C^T &\\
& & & & &C &B_{-\frac{n-1}{2}}
\end{matrix}
\right)
\end{align*}
Let 
\begin{align*}
L_3:=
\left(\begin{matrix}
&(-1)^1 L_1 & & & &\\
& &(-1)^2 L_1 & & &\\
& & &\ddots & &\\
& & & &(-1)^{n-1} L_1 &\\
& & & & &(-1)^n L_1
\end{matrix}\right)
\end{align*}
We then have
\begin{align*}
&L_3L_2R(\theta)L_2L_3\\=
&\left(
\begin{matrix}
&L_1B_{\frac{n-1}{2}}(\theta)L_1\ \ &-L_1C^TL_1\ \ & & &\\
&-L_1CL_1 &L_1B_{-1+\frac{n-1}{2}}(\theta)L_1 &\ddots  & &\\
& &\ddots &\ddots  & &\\
& &  & &L_1B_{1-\frac{n-1}{2}}(\theta)L_1 &-L_1C^TL_1 \\
& &  & &-L_1CL_1 &L_1B_{-\frac{n-1}{2}}(\theta)L_1
\end{matrix}
\right)
\end{align*}
By \eqref{eq:L1BL1} and $L_1CL_1=C^T$, we have
\begin{align*}
L_3L_2R(\theta)L_2L_3=-R(-\theta),
\end{align*}
which implies $\det(R(\theta))$ is an even function in $\theta$. 
\end{proof}
Combining Lemmas \ref{lem:poly1} and \ref{lem:poly2}, we arrive at the key lemma.
\begin{lemma}\label{lem:main}
$P_{[1,2n],z}(\theta)$ is a polynomial of $\sin(2\pi(\theta+\frac{n-1}{2}\omega))$ of degree at most $n$.
\end{lemma}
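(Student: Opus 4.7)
The plan is to combine the two preceding lemmas via a change of variables and a Chebyshev argument. First I would introduce the shifted coordinate $\eta := \theta + \frac{n-1}{2}\omega - \frac{1}{4}$, chosen so that
\begin{align*}
\sin\bigl(2\pi(\theta + \tfrac{n-1}{2}\omega)\bigr) = \sin(2\pi\eta + \tfrac{\pi}{2}) = \cos(2\pi\eta).
\end{align*}
In this variable, Lemma \ref{lem:poly2} says precisely that $P_{[1,2n],z}$ is an \emph{even} function of $\eta$, and Lemma \ref{lem:poly1} together with the angle-addition formulas (since the substitution $\theta = \eta + \tfrac{1}{4} - \tfrac{n-1}{2}\omega$ only rotates $(\cos 2\pi\theta,\sin 2\pi\theta)$ linearly into $(\cos 2\pi\eta,\sin 2\pi\eta)$) says that $P_{[1,2n],z}$, viewed as a function of $\eta$, is still a polynomial in $\cos(2\pi\eta)$ and $\sin(2\pi\eta)$ of total degree at most $n$.

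Next I would expand $P_{[1,2n],z}$, as a function of $\eta$, as a trigonometric polynomial $\sum_{|k|\leq n} c_k e^{2\pi i k \eta}$. Evenness in $\eta$ forces $c_k = c_{-k}$, so the polynomial reduces to $\sum_{k=0}^{n} a_k \cos(2\pi k \eta)$. Using the Chebyshev identity $\cos(2\pi k \eta) = T_k(\cos(2\pi\eta))$, I can rewrite this as a polynomial in $\cos(2\pi\eta)$ of degree at most $n$. Substituting back $\cos(2\pi\eta) = \sin(2\pi(\theta + \tfrac{n-1}{2}\omega))$ gives the claim.

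There is essentially no real obstacle: the two preparatory lemmas do all the structural work. The only points that need a careful sentence are (i) verifying that the linear change of variables from $(\cos 2\pi\theta,\sin 2\pi\theta)$ to $(\cos 2\pi\eta,\sin 2\pi\eta)$ preserves total degree, and (ii) the standard fact that an even trigonometric polynomial of degree $\leq n$ is a polynomial of degree $\leq n$ in $\cos$ of its argument. Both are routine, so the proof will be short.
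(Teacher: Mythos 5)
Your argument is correct and is exactly the intended one: the paper simply states ``combining Lemmas \ref{lem:poly1} and \ref{lem:poly2}, we arrive at the key lemma,'' and your change of variables $\eta = \theta + \tfrac{n-1}{2}\omega - \tfrac14$ together with the Chebyshev rewriting of an even trigonometric polynomial is the standard way to make that combination precise.
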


Next, we obtain the average lower bound of $P_{[1,2n],z}(\theta)$.
\begin{lemma}\label{lem:ave_low}
We have for any $\varepsilon>0$, for $n$ large enough, the following holds
\begin{align*}
\frac{1}{2n}\int \ln|P_{[1,2n],z}(\theta)|\, d\theta\geq \frac{1}{2}\ln(\lambda_2(1+\lambda_1'))-\varepsilon.
\end{align*}
\end{lemma}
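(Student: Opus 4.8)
The plan is to relate the average $\frac{1}{2n}\int \ln|P_{[1,2n],z}(\theta)|\,d\theta$ to the Lyapunov exponent of the two-step Szeg\"o cocycle, which by \eqref{eq:LA=LS=LM} and Theorem~\ref{thm:LE} equals $\max(0,\ln\frac{\lambda_2(1+\lambda_1')}{\lambda_1(1+\lambda_2')})$. First I would use \eqref{eq:Sze2} (with $a=1$, $b=2n$) to express the relevant entry of the transfer matrix product $S_{2n,z}\cdots S_{1,z}$ in terms of $P_{[1,2n],z}$ and $P_{[2,2n],z}$; equivalently, using \eqref{eq:P_upper_left} from the proof of Lemma~\ref{lem:poly1}, $P_{[1,2n],z}(\theta)$ is the upper-left entry of $z^{n-1}S_{2n,z}\prod_{j=n-1}^{0}D(\theta+j\omega)$. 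Since $\prod_{j=n-1}^0 D(\theta+j\omega) = (\prod_{j=0}^{2n-1}|\rho_j|)\,\tilde S_n(\theta,z)\cdot z^{\,?}$ up to an explicit scalar coming from the normalization in \eqref{def:tS}, the point is that $|P_{[1,2n],z}(\theta)|$ is comparable, after dividing by $\prod_{j}|\rho_j|$ and a power of $|z|=1$, to a single matrix entry of $\tilde S_n(\theta,z)$.

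Next I would run the standard argument: the average of $\ln$ of one entry of a cocycle product is bounded below by the Lyapunov exponent minus $\varepsilon$. Concretely, $\|\tilde S_n(\theta,z)\| \le 4\max_{k,l}|(\tilde S_n)_{kl}(\theta)|$, so
\begin{align*}
\frac{1}{n}\int \ln\|\tilde S_n(\theta,z)\|\,d\theta \le \ln 4 + \frac{1}{n}\int \ln\Big(\max_{k,l}|(\tilde S_n)_{kl}(\theta)|\Big)\,d\theta,
\end{align*}
and the last integral is at most $\sum_{k,l}\frac{1}{n}\int (\ln|(\tilde S_n)_{kl}|)^+\,d\theta$; but for a polynomial in $\sin(2\pi(\theta+\tfrac{n-1}{2}\omega))$ of degree $\le n$ one controls the negative part via Jensen / the fact that $\int \ln^-$ of such a trigonometric polynomial is $O(\ln n)$ when its sup-norm is not too small, or more robustly by averaging a lower bound over one entry directly. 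The cleanest route is: by \eqref{eq:Sze1} or \eqref{eq:Sze2}, $\prod_{j=1}^{2n}|\rho_j|\cdot\|S_{2n,z}\cdots S_{1,z}\| \lesssim \max\big(|P_{[1,2n],z}|,|P_{[2,2n],z}|,\dots\big)$ with matching lower bound $\gtrsim$ the same, so taking $\frac1n\log$, integrating, and using that both $P_{[1,2n],z}$ and $P_{[2,2n],z}$ are polynomials of $\sin$ of degree $\le n$ (so their $\log$-averages differ from $\frac1n\int\log\|\prod S\|$ by at most $\frac{C\ln n}{n}$), we get $\frac{1}{2n}\int\ln|P_{[1,2n],z}|\,d\theta \ge \frac12 L(\omega,\tilde S(\cdot,z)) + \frac{1}{2n}\int\ln\prod_{j=1}^{2n}|\rho_j| - \frac{C\ln n}{n}$.

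Finally I would compute $\frac{1}{2n}\int\ln\prod_{j=1}^{2n}|\rho_j|\,d\theta$. From \eqref{def:alpha_n}, the even-index factors are $|\rho_{2n}|=\lambda_1$ and the odd-index factors are $|\rho_{2n+1}|=|\lambda_2\cos(2\pi(\theta+n\omega))+i\lambda_2'|$, so
\begin{align*}
\frac{1}{2n}\int \ln\prod_{j=1}^{2n}|\rho_j|\,d\theta = \frac12\ln\lambda_1 + \frac12\int_{\T}\ln|\lambda_2\cos(2\pi\theta)+i\lambda_2'|\,d\theta = \frac12\ln\lambda_1 + \frac12\Big(\ln\tfrac{1+\lambda_2'}{2} - \ln\lambda_1\Big),
\end{align*}
using \eqref{eq:int_rho} divided by $\lambda_1$ inside the log, i.e. $\int_{\T}\ln|\lambda_2\cos 2\pi\theta + i\lambda_2'|\,d\theta = \ln\frac{1+\lambda_2'}{2}$. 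Hence this term equals $\frac12\ln\frac{1+\lambda_2'}{2}$. Adding $\frac12 L = \frac12\ln\frac{\lambda_2(1+\lambda_1')}{\lambda_1(1+\lambda_2')}$ (valid since $\lambda_1<\lambda_2$) gives exactly $\frac12\ln\frac{\lambda_2(1+\lambda_1')}{\lambda_1(1+\lambda_2')} + \frac12\ln\frac{1+\lambda_2'}{2} = \frac12\ln\frac{\lambda_2(1+\lambda_1')}{2\lambda_1}\cdot\frac{2\lambda_1}{?}$... wait, recombining: $\frac12\ln\frac{\lambda_2(1+\lambda_1')(1+\lambda_2')}{\lambda_1(1+\lambda_2')\cdot 2} = \frac12\ln\frac{\lambda_2(1+\lambda_1')}{2\lambda_1}$, which is not quite $\frac12\ln(\lambda_2(1+\lambda_1'))$; the discrepancy $\frac12\ln\frac{1}{2\lambda_1}$ should be absorbed by a more careful bookkeeping of the power of $z$ and the $\frac1z$ normalization in \eqref{def:tS}, since $\tilde S$ carries an extra $1/(|\rho_1\rho_0|)$ per two steps that I double-counted. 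The main obstacle is precisely this constant bookkeeping — correctly tracking the scalar prefactors relating $P_{[1,2n],z}$, the normalized product $\tilde S_n$, and $\prod|\rho_j|$ through \eqref{def:tS}, \eqref{eq:SS} and \eqref{eq:Sze2} — together with justifying that replacing $\frac1n\int\ln\|\tilde S_n\|$ by $\frac1n\int\ln$(one entry) costs only $O(\ln n/n)$, which follows from Lemma~\ref{lana}-type control on $\int\ln^-$ of a degree-$\le n$ trigonometric polynomial whose leading behavior is pinned down by the known Lyapunov exponent.
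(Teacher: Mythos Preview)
Your strategy is genuinely different from the paper's, and the heuristic is correct: if one could show
\[
\frac{1}{2n}\int\ln|P_{[1,2n],z}(\theta)|\,d\theta \;\ge\; \frac{1}{2}L(\omega,\tilde S(\cdot,z)) \;+\; \frac{1}{2n}\int\ln\prod_{j=1}^{2n}|\rho_j|\,d\theta \;-\;o(1),
\]
then the right-hand side is exactly $\tfrac12 L + \tfrac12 L_- = \tfrac12 L_+$ (your bookkeeping slipped: the $\rho$-integral equals $\tfrac12\ln\lambda_1 + \tfrac12\ln\tfrac{1+\lambda_2'}{2}=\tfrac12 L_-$, not $\tfrac12\ln\tfrac{1+\lambda_2'}{2}$; with that fix the constants match perfectly and there is no leftover factor of $2\lambda_1$).

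The real gap is the inequality itself. You want a \emph{lower} bound on the log-average of a \emph{single entry} of the cocycle product by the Lyapunov exponent. Everything you wrote gives the opposite direction: $|P_{[1,2n],z}|\le (\prod|\rho_j|)\|\tilde S_n\|$ yields the upper bound, and your chain $\|\tilde S_n\|\le 4\max_{k,\ell}|(\tilde S_n)_{k\ell}|$ only tells you that \emph{some} entry has large log-average, not that the specific $(1,1)$ entry does. The appeal to ``Lemma~\ref{lana}-type control on $\int\ln^-$'' does not close this: knowing that $P_{[1,2n],z}$ is a trigonometric polynomial of degree $\le n$ bounds the number of zeros, but to bound $\int\ln|P|$ from below you still need a lower bound on its sup (or Mahler measure), which is exactly what is in question. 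Pointwise Oseledets plus Fatou also fails without a uniform integrability bound from below, which again is the missing ingredient.

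The paper bypasses this entirely via Herman's subharmonicity trick: complexify $e^{2\pi i\theta}\mapsto w$, write $D(\theta)=w^{-1}\tilde D_1(w)$ with $\tilde D_1$ polynomial in $w$, and use that $w\mapsto \ln\|U S_{2n,z}\prod_j\tilde D_1(e^{2\pi ij\omega}w)U\|$ is subharmonic, so its circle average dominates its value at $w=0$. Diagonalizing $\tilde D_1(0)$ gives eigenvalues $\tfrac{1}{2i}\lambda_2(\pm 1-\lambda_1')$, and the dominant one $\lambda_2(1+\lambda_1')/2$ produces $\tfrac12\ln\tfrac{\lambda_2(1+\lambda_1')}{2}=\tfrac12 L_+$ directly. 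This is both shorter and avoids the entry-versus-norm issue that your approach leaves open.
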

\begin{proof}
This proof uses Herman's trick \cite{Herman}.
Let $U:=\mathrm{diag}(1,0)$.
Let 
\begin{align*}
\tilde{D}(w):=w^{-1}
\left(\begin{matrix}
\frac{\lambda_1'\lambda_2}{2i}(w^2-1)+wz &-\lambda_1'w-\frac{\lambda_2}{2i}(w^2-1)z^{-1}\\
-\lambda_1'w-\frac{\lambda_2}{2i}(w^2-1)z &\frac{\lambda_1'\lambda_2}{2i}(w^2-1)+wz^{-1}
\end{matrix}\right)=:w^{-1}\tilde{D}_1(w),
\end{align*}
where $\tilde{D}(w)$ is just $D(\theta)$ with $e^{2\pi i\theta}$ replaced with $w$.
By \eqref{eq:P_upper_left}, we have
\begin{align}\label{eq:Herman1}
\frac{1}{2n}\int_{\T} \ln|P_{[1,2n],z}(\theta)|\, d\theta
=&\frac{1}{2n}\int_{\T} \ln \left\|U(S_{2n,z} \prod_{j=n-1}^0 D(\theta+j\omega))U\right\|\, d\theta \notag\\
=&\frac{1}{2n}\int_{\partial \mathbb D} \ln \left\| U(S_{2n,z} \prod_{j=n-1}^0 \tilde{D}_1(e^{2\pi i j\omega} w)) U\right\|, dw \notag\\
\geq &\frac{1}{2n} \ln \left\|U(S_{2n,z} (\tilde{D}_1(0))^n)U\right\|,
\end{align}
where we used the property of sub-harmonic functions.

It is easy to compute
\begin{align}\label{eq:D10}
\tilde{D}_1(0)=\frac{1}{2i}
\left(\begin{matrix}
-\lambda_1'\lambda_2 &\lambda_2z^{-1}\\
\lambda_2 z &-\lambda_1'\lambda_2
\end{matrix}\right)=
\frac{1}{2i} Y^{-1}\mathrm{diag}(\lambda_2(1+\lambda_1'), \lambda_2(-1+\lambda_1'))Y,
\end{align}
where 
\begin{align*}
Y=\left(\begin{matrix}
1 &1\\
z &-z
\end{matrix}\right).
\end{align*}
Plugging \eqref{eq:D10} into \eqref{eq:Herman1}, we have
\begin{align*}
\frac{1}{2n}\int_{\T} \ln |P_{[1,2n],z}(\theta)|\, d\theta\geq \frac{1}{2n}\ln|\frac{(-z^2+\lambda_1'z)a^n-(z+\lambda_1')b^n}{2}|\geq \frac{1}{2}\ln\frac{\lambda_2(1+\lambda_1')}{2}-\varepsilon,
\end{align*}
where $a=\lambda_2(1+\lambda_1')$ and $b=\lambda_2(-1+\lambda_1')$. This proves Lemma \ref{lem:ave_low}.
\end{proof}

\section{Anderson localization: Proof of Theorem \ref{thm:main}}
With our preparations in Section \ref{sec:key}, the proof of Theorem \ref{thm:main} is very similar to that of the almost Mathieu model in \cite{jit99}, or more recent works on the other related models \cite{JKS,jyMaryland,ehmtransition,ew}. We only give an outline of proof below.

By Schnol's theorem \cite{berez,DFLY,HanSchnol}, to prove Anderson localization, it suffices to show generalized eigenfunction $\Psi$, namely 
$|\Psi_y|\leq C|y|$ for some constant $C$, to $W_{\lambda_1,\lambda_2,\omega,\theta}\Psi=z\Psi$ decays exponentially.
Throughout this section, let 
\begin{align*}
\begin{cases}
L_+:=\ln\frac{\lambda_2(1+\lambda_1')}{2}\\
L_-:=\ln\frac{\lambda_1(1+\lambda_2')}{2}\\
L:=L_+-L_-=\ln\frac{\lambda_2(1+\lambda_1')}{\lambda_1(1+\lambda_2')}
\end{cases}
\end{align*}

Without loss of generality, we consider $y>0$ sufficiently large. 
Let $n$ be the smallest positive integer such that $\varepsilon q_n<y<\frac{1}{20}q_{n+1}$, where $p_n/q_n$ is the continued fraction approximant to $\omega$, see \eqref{defpnqn}.
Let $m$ be the largest positive integer such that $y\geq 6q_m$ and let $s$ be the largest integer such that 
\begin{align}\label{eq:s_qm}
\max(\varepsilon q_n, 6sq_m)\leq y<\min(6(s+1)q_m, 6q_{m+1}).
\end{align}

Let $h=2sq_m-2$ and set 
\begin{align*}
\begin{cases}
I_1:=[y-sq_m-[sq_m/4]+1, y-[sq_m/4]]\cap (2\Z+1)\\
I_2:=[-2sq_m+[sq_m/4]+1, -sq_m+[sq_m/4]]\cap (2\Z+1)
\end{cases}
\end{align*}

The following lemma essentially goes back to \cite{jit99}, and is proved in exactly this form in \cite{ew}.
\begin{lemma}\label{lem:uni}
For $\theta$ such that $\tilde{\gamma}(\omega,\theta)=0$, 
we have $\{\theta+j\omega-\frac{1}{4}:\ j\in I_1\cup I_2\}$ is $\varepsilon$-uniform when $y$ is sufficiently large.
\end{lemma}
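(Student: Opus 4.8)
The plan is to reduce the uniformity of the set $\{\theta+j\omega-\frac14:\ j\in I_1\cup I_2\}$ to a product estimate controlled by Lemma \ref{lana}, exactly as in \cite{jit99} and \cite{ew}. First I would recall that $\varepsilon$-uniformity is a statement about the Lagrange interpolation factors $\prod_{\ell\neq j}\frac{z-\cos 2\pi(\theta_\ell+\frac{n-1}{2}\omega)}{\cos 2\pi(\theta_j+\frac{n-1}{2}\omega)-\cos 2\pi(\theta_\ell+\frac{n-1}{2}\omega)}$ with $\theta_j$ ranging over the shifted set; after the shift $\theta_j=\theta+j\omega-\frac14$, the argument of each cosine becomes $2\pi(\theta+j\omega-\frac14+\frac{h/2}{1}\omega)$ for the appropriate centering, and the key point is that $I_1\cup I_2$ consists of $h+2=2sq_m$ points (counting only the odd integers, with the interval lengths chosen so that $|I_1|=|I_2|=sq_m/2$, total $sq_m$) that, modulo reindexing, form essentially $sq_m$ consecutive values of $j$ — or more precisely two runs of consecutive arithmetic-progression values with common difference $2$, which after relabeling $j\mapsto j/2$ become consecutive integers.

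The core estimate is then the following: the numerator $\prod_{\ell\neq j}|z-\cos 2\pi(\cdots)|$ is bounded above by $\prod_{\ell}(|z|+1)\le 2^{h+1}$ trivially (using $z\in[0,1]$ and $|\cos|\le 1$), so we only need the lower bound on the denominator $\prod_{\ell\neq j}|\cos 2\pi(\theta_j+\tfrac{n-1}2\omega)-\cos 2\pi(\theta_\ell+\tfrac{n-1}2\omega)|$. Using the product-to-sum identity $\cos A-\cos B=-2\sin\frac{A+B}{2}\sin\frac{A-B}{2}$, each factor splits into $2|\sin\pi(\cdots)||\sin\pi(\cdots)|$, where one of the sine arguments is of the form $\pi((\text{integer})\omega + 2\theta - \frac12 + (\text{shift}))$ and the other is a pure $\pi(\text{integer})\omega$ type term. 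This is precisely where the Diophantine condition on $\omega$ and the non-resonance condition $\tilde\gamma(\omega,\theta)=0$ from \eqref{eq:theta_set} enter: the "diagonal" sines $\sin\pi(\text{integer}\cdot\omega)$ are handled by Lemma \ref{lana} (the $q_m$-periodicity makes the product over a block of length $\asymp sq_m$ telescope into $s$ copies of a single $q_m$-block product, each contributing $-C\ln q_m + (q_m-1)\ln 2$), and the "off-diagonal" sines $\sin\pi(2\theta-\frac12+\text{integer}\cdot\omega)$ are bounded below using $\tilde\gamma(\omega,\theta)=0$, which by definition says $\|2\theta-\frac12+n\omega\|_{\T}\ge e^{-\varepsilon|n|}$ for all large $n$, so each such factor is at least $e^{-\varepsilon|n|}$ and there are at most $O(sq_m)$ of them, giving a total loss of at most $e^{-C\varepsilon s q_m\cdot\text{(bounded)}}$, which since $sq_m\le y \le \frac1{20}q_{n+1}$ and the "$n$" in the exponent of the uniformity definition is comparable to $sq_m/2=h/2$, is absorbed into $e^{\kappa n}$ for any $\kappa>0$ once $\varepsilon$ is chosen small.

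The main obstacle I expect is bookkeeping the exact relationship between the index $n$ appearing in the definition of $\varepsilon$-uniformity (which for a set of $h+2$ points should be thought of as $h/2 = sq_m-1$), the centering shift $\frac{n-1}{2}\omega$ hidden in that definition versus the shift $-\frac14$ appearing in the statement, and the separation into the two sub-blocks $I_1,I_2$ (which are placed asymmetrically around $0$ and around $y$ precisely so that the resonances coming from the two endpoints in the Poisson formula \eqref{eq:Poisson_1} do not coincide). Once the arguments of all the sines are written out explicitly, verifying that every sine falling under Lemma \ref{lana} indeed ranges over a full residue system mod $q_m$ (so the lemma applies cleanly), and that every remaining sine is of the non-resonant form $\sin\pi(2\theta-\frac12+k\omega)$ with $|k|\lesssim y$, is routine but must be done carefully; this is exactly the computation carried out in \cite{ew}, so I would cite that reference for the details and only indicate the role of the shift $-\frac14$ (it converts $\cos 2\pi(\theta+j\omega)$ into $\sin 2\pi(\theta+j\omega-\frac14)$-type quantities, aligning the interpolation nodes with the $\sin$-polynomial structure established in Lemma \ref{lem:main}).
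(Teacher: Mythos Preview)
Your overall strategy --- reduce to sine products via $\cos A-\cos B=-2\sin\frac{A+B}{2}\sin\frac{A-B}{2}$ and then appeal to Lemma~\ref{lana} together with the hypothesis $\tilde\gamma(\omega,\theta)=0$ --- is the one used in \cite{jit99,ew}, and indeed the paper proves the lemma only by citing those references. However, the execution you outline has a genuine gap: the trivial bound $\prod_{\ell\neq j}|z-\cos 2\pi(\cdots)|\le 2^{h+1}$ on the numerator is far too weak. After the product-to-sum splitting, the denominator is of genuine size $\sim 2^{-h}$ (the prefactor $2^h$ is cancelled by the two sine products, each $\sim 2^{-h}$ by Lemma~\ref{lana}), so your ratio estimate is $\sim 2^{h}/2^{-h}=4^{h}$, not $e^{\varepsilon h}$. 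The standard argument writes $z=\cos 2\pi a$ and applies the \emph{same} identity to the numerator, then uses the \emph{upper} bound of Lemma~\ref{lana} on each of the products $\prod_{\ell}|\sin\pi(a\pm\psi_\ell)|$ to bring the numerator down to $\sim 2^{-h}$ as well; only then do numerator and denominator cancel to leading exponential order.

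Your handling of the ``off-diagonal'' denominator factors $\sin\pi(2\theta-\tfrac12+k\omega)$ has the same defect. Bounding \emph{every} such factor individually by $e^{-\varepsilon|k|}$ and multiplying over $O(sq_m)$ of them (with $|k|$ itself of order $sq_m$) yields at best $\prod\ge e^{-C\varepsilon(sq_m)^2}$, which is worthless at scale $e^{\varepsilon h}$. These factors also lie along an $\omega$-orbit, so the \emph{lower} bound of Lemma~\ref{lana} handles all but the single smallest of them; the condition $\tilde\gamma(\omega,\theta)=0$ is needed only for that one exceptional factor, and the specific placement of $I_1,I_2$ is precisely what keeps its index from being too large.
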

Note that $I_1\cup I_2$ consist of at least $h+1$ integers.

Combining Lemmas \ref{lem:uni} and \ref{lem:ave_low} with \eqref{eq:Lagrange}, we have the following.
\begin{lemma}\label{lem:I1_I2_large}
There exists $x_1\in I_1\cup I_2$ such that 
\begin{align}\label{eq:P_den_large}
|P_{[x_1,x_1+h-1],z}(\theta)|\geq e^{(\frac{L_+}{2}-2\varepsilon)h}.
\end{align}
\end{lemma}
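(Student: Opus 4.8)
The plan is to run the standard ``two good intervals'' argument from Jitomirskaya's work on the almost Mathieu operator, as adapted in \cite{ew}, combining the uniformity of the phase set (Lemma \ref{lem:uni}) with the average lower bound for $P_{[1,2n],z}$ (Lemma \ref{lem:ave_low}) through the Lagrange interpolation formula \eqref{eq:Lagrange}. The key point is that, by Lemma \ref{lem:main}, for each $x\in I_1\cup I_2$ the function $\theta\mapsto P_{[x,x+h-1],z}(\theta)$ is a polynomial of degree at most $h/2$ in $\sin\bigl(2\pi(\theta+\tfrac{x-1}{2}\omega+\tfrac{h/2-1}{2}\omega)\bigr)$; after the change of variables $\theta\mapsto\theta-\tfrac14$ the interpolation nodes are exactly the points $\theta+j\omega-\tfrac14$ for $j$ ranging over $I_1\cup I_2$ (translated appropriately), which is why Lemma \ref{lem:uni} is phrased in terms of $\cos$ of shifted arguments.

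First I would fix one particular box, say $[1,h]$, and apply the Lagrange interpolation identity \eqref{eq:Lagrange} with the $h/2+1$ nodes coming from $I_1\cup I_2$ (note $|I_1\cup I_2|\ge h+1\ge h/2+1$, so we have more than enough nodes and may discard the surplus): for the single value $\theta$ of interest we can write $P_{[1,h],z}(\theta)$ as a sum over $j$ of $P_{[1,h],z}(\theta_j)$ times the Lagrange basis factor evaluated at $\theta$. Taking absolute values and using $\varepsilon$-uniformity of the node set to bound every Lagrange basis factor by $e^{\varepsilon h}$, I get
\begin{align*}
|P_{[1,h],z}(\theta)| \le (h+2)\, e^{\varepsilon h}\, \max_{x\in I_1\cup I_2} |P_{[x,x+h-1],z}(\theta)|,
\end{align*}
where I have used translation of the Verblunsky coefficients in $\theta$ to identify $P_{[1,h],z}$ evaluated at a shifted phase with $P_{[x,x+h-1],z}$ evaluated at $\theta$ (this is exactly the periodicity/covariance built into \eqref{def:alpha_n}). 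Since $h=2sq_m-2$ is even and $\to\infty$ as $y\to\infty$, and since the translated phase still lies on the torus, Lemma \ref{lem:ave_low} applies to $\frac{1}{h}\int \ln|P_{[1,h],z}(\theta)|\,d\theta \ge \frac12\ln(\lambda_2(1+\lambda_1'))-\varepsilon = L_+ - \varepsilon$ for $h$ large; hence there is at least one value of the phase — and by density/continuity the specific $\theta$ we care about, up to an extra $\varepsilon h$ loss absorbed into the exponent — for which $|P_{[1,h],z}(\theta)|\ge e^{(L_+-\varepsilon)h}\ge e^{(L_+/2-\varepsilon)h}$. Wait — more carefully: Lemma \ref{lem:ave_low} only gives a lower bound for the \emph{average}, so the correct deduction is that the left-hand side above, when integrated, is at least $e^{(L_+-\varepsilon)h}$ up to polynomial factors, which forces the $\max$ on the right to be at least $e^{(L_+-2\varepsilon)h}$ for the relevant $\theta$; then since $L_+ > L_+/2$ (as $\lambda_2(1+\lambda_1')\le 2$ only gives $L_+\le 0$, so actually $L_+/2 \ge L_+$) one must be slightly careful with signs. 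Because $L_+\le 0$, the bound $e^{(L_+/2-2\varepsilon)h}$ is the \emph{larger} target, so the statement \eqref{eq:P_den_large} is what we actually need and it follows from the above chain with room to spare once we note $\tfrac{L_+}{2}-2\varepsilon \le L_+ - 2\varepsilon$ is false in general; the honest route is: the interpolation gives $|P_{[1,h],z}(\theta)|\le (h+2)e^{\varepsilon h}\max_x |P_{[x,x+h-1],z}(\theta)|$, and a separate lower bound $|P_{[1,h],z}(\theta)|\ge e^{(L_+/2-\varepsilon)h}$ valid for our non-resonant $\theta$ (this is the point where the hypothesis $\tilde\gamma(\omega,\theta)=0$ and an additional elementary estimate on $P_{[1,h],z}$ at a single non-resonant point enter, exactly as in \cite{jit99,ew}) then yields \eqref{eq:P_den_large}.

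The main obstacle, and the step I would treat most carefully, is establishing the single-point lower bound $|P_{[1,h],z}(\theta)|\ge e^{(L_+/2-\varepsilon)h}$ at the given non-resonant $\theta$, rather than merely the average lower bound of Lemma \ref{lem:ave_low}: this is precisely where one cannot directly invoke subharmonicity and must instead combine the polynomial structure (Lemma \ref{lem:main}), the upper bound on the transfer matrix norm from Lemma \ref{lem:upperbounds} applied to the Szeg\"o cocycle via the conjugacy \eqref{eq:LA=LS=LM}, and the arithmetic condition $\tilde\gamma(\omega,\theta)=0$ to rule out that the relevant trigonometric polynomial is anomalously small at $\theta$. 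The rest — the interpolation bookkeeping, tracking that $h=2sq_m-2$ is large enough for all the ``for $n$ large'' clauses, and the $\varepsilon$-management — is routine and follows \cite{jit99,ew} line by line. I would state and prove the single-point bound as a short intermediate claim, then conclude Lemma \ref{lem:I1_I2_large} in two lines via \eqref{eq:Lagrange} and Lemma \ref{lem:uni}.
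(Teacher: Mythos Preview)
Your opening plan is exactly right --- combine Lemma~\ref{lem:main}, Lemma~\ref{lem:uni}, Lemma~\ref{lem:ave_low}, and \eqref{eq:Lagrange} --- and this is precisely what the paper does. But your execution misidentifies where the lower bound comes from, and the ``main obstacle'' you flag (a pointwise lower bound $|P_{[1,h],z}(\theta)|\ge e^{(L_+/2-\varepsilon)h}$ at the \emph{specific} non-resonant $\theta$) is neither needed nor proved in this scheme.

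The correct logic is a contradiction argument in which the left-hand side of the interpolation inequality is evaluated at an \emph{arbitrary} point (hence at the point where the supremum is attained), while the right-hand side is evaluated at the \emph{fixed} nodes $\theta_j$ coming from our given $\theta$ via covariance. Concretely: suppose \eqref{eq:P_den_large} fails for every $x\in I_1\cup I_2$. By covariance this means $|P_{[1,h],z}(\theta_j)|<e^{(L_+/2-2\varepsilon)h}$ for each interpolation node $\theta_j$. Then \eqref{eq:Lagrange} together with $\varepsilon$-uniformity (Lemma~\ref{lem:uni}) gives, for \emph{every} $\theta'\in\T$,
\[
|P_{[1,h],z}(\theta')|\le (h/2+1)\,e^{\varepsilon h/2}\,e^{(L_+/2-2\varepsilon)h}\le e^{(L_+/2-\varepsilon)h}
\]
for $h$ large. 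In particular $\sup_{\theta'}|P_{[1,h],z}(\theta')|\le e^{(L_+/2-\varepsilon)h}$. But the trivial inequality $\log\sup\ge\text{average of }\log$ combined with Lemma~\ref{lem:ave_low} gives $\sup_{\theta'}|P_{[1,h],z}(\theta')|\ge e^{(L_+/2-\varepsilon/2)h}$ for $h$ large (adjusting $\varepsilon$), a contradiction.

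So the average lower bound is used only through the elementary fact that it forces the supremum to be large; no single-point lower bound at the given $\theta$ is ever invoked, and the arithmetic hypothesis $\tilde\gamma(\omega,\theta)=0$ enters \emph{only} through Lemma~\ref{lem:uni}. Your proposed route --- proving a pointwise bound at $\theta$ via $\tilde\gamma=0$ and the transfer-matrix upper bound --- is both unnecessary and not obviously feasible, since a degree-$n$ trigonometric polynomial whose logarithmic average is $\sim cn$ can still vanish at a prescribed point.
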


It follows from a standard argument that Lemma \ref{lem:I1_I2_large} implies the following.
\begin{corollary}\label{cor:I2_large}
There exists $x_1\in I_2$ such that \eqref{eq:P_den_large} holds.
\end{corollary}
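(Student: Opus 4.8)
The plan is to deduce Corollary \ref{cor:I2_large} from Lemma \ref{lem:I1_I2_large} by ruling out the possibility that the index $x_1$ lands in $I_1$. First I would observe that the two blocks $I_1$ and $I_2$ are translates of each other by a multiple of $q_m$: explicitly, $I_1 = I_2 + (y + sq_m)$ and $y + sq_m \equiv (y+sq_m) \bmod q_m$-shifts of the phase, so the polynomials $P_{[x,x+h-1],z}(\theta)$ indexed by $x\in I_1$ differ from the corresponding ones indexed by $x\in I_2$ only by a shift of the phase $\theta$ by an amount that, modulo the continued-fraction machinery, is within $O(1/q_{m+1})$ of an integer multiple of $\omega$-dependent quantity. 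Concretely, $P_{[x_1, x_1+h-1],z}$ is a polynomial in $\sin(2\pi(\theta + k\omega))$ for the appropriate center $k$ by Lemma \ref{lem:main}, and the centers for the $I_1$ block versus the $I_2$ block differ by $\approx (y+sq_m)/2 \cdot$ (an integer combination involving $q_m$).

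The key mechanism is the standard ``two blocks cannot both be good'' argument from \cite{jit99}: if $x_1 \in I_1$, then since the generalized eigenfunction $\Psi$ is polynomially bounded while the determinant $P_{[x_1, x_1+h-1],z}$ is exponentially large, the Poisson/Cramer formula \eqref{eq:Poisson_1} forces $\Psi$ to be exponentially small at the center of $I_1$. But the center of $I_1$ is close to $y$ (within $O(sq_m) = o(y)$... in fact comparable, but the point $y$ itself is what we want a lower bound at), and more to the point, $0$ (the site near which $\Psi$ is normalized, $|\Psi_0|+|\Psi_1| \gtrsim 1$) sits near the center of $I_2$. So I would argue: if both $I_1$ and $I_2$ hosted such a good block, one could sandwich $0$ between two exponentially-good blocks and derive $|\Psi_0| + |\Psi_1|$ exponentially small, contradicting the normalization of the generalized eigenfunction. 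Hence the good index produced by Lemma \ref{lem:I1_I2_large} must lie in $I_2$. Alternatively, and perhaps more cleanly: the block good at $x_1 \in I_1$ would be a block straddling a region far from $0$, and one uses it together with the (assumed-for-contradiction) absence of a good block in $I_2$; the genuine content is that $I_2$ surrounds the origin, so a good block there is what the later decay estimate at $y$ actually needs. I would set this up as: suppose no $x_1 \in I_2$ satisfies \eqref{eq:P_den_large}; then every block based in $I_2$ has $|P| < e^{(L_+/2 - 2\varepsilon)h}$, and combined with the lower bound $\frac{1}{2n}\int \ln|P_{[1,2n]}| \geq L_+/2 - \varepsilon$ (Lemma \ref{lem:ave_low}) applied around the origin, plus the $\varepsilon$-uniformity of $I_1\cup I_2$ (Lemma \ref{lem:uni}) feeding Lagrange interpolation \eqref{eq:Lagrange}, one forces a contradiction with $|\Psi_0|+|\Psi_1|\gtrsim 1$.

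The main obstacle I anticipate is the bookkeeping of which block-determinant controls the eigenfunction at which site, and getting the shift between $I_1$ and $I_2$ to interact correctly with the Diophantine/continued-fraction estimates so that both blocks ``see'' essentially the same polynomial lower bound. In particular, one must verify that the phase shift separating the $I_1$-centered and $I_2$-centered polynomials is of the form $\frac{1}{2}(y+sq_m)\omega + (\text{integer})$ up to an error controlled by $\|q_m\omega\|_{\T} \leq 1/q_{m+1}$, and that this error is negligible at scale $h$; this is where \eqref{eq:s_qm} and the choice $y < 6q_{m+1}$ enter. I would also need to confirm that $I_2$ genuinely contains $0$ or sites adjacent to it — from the definition $I_2 = [-2sq_m + [sq_m/4]+1, -sq_m + [sq_m/4]]\cap(2\Z+1)$ this block lies entirely to the \emph{left} of the origin, so the precise statement is that $0$ and $y$ lie on opposite sides of (or are bracketed by) the good block once it is found in $I_2$, which is exactly the configuration needed to run the Poisson formula \eqref{eq:Poisson_1} with $[a,b]$ a window around $0$ having its left end in $I_2$. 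Modulo this geometric bookkeeping, the deduction is the routine ``standard argument'' alluded to in the statement, and I would present it in two or three paragraphs following \cite{jit99} and \cite{ew}.
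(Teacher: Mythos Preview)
The paper gives no explicit proof here; it simply says ``It follows from a standard argument.'' Your proposal correctly identifies that standard argument from \cite{jit99}, but you are running it in the wrong direction, and the confusion is not entirely your fault: the corollary as printed almost certainly contains a typo and should read $x_1\in I_1$, not $I_2$. Look at how the paper uses the result immediately afterward: it sets $x_2=x_1+h-1$ and bounds $\prod_{j=y+1}^{x_2}|\rho_j|\cdot|P_{[x_1,y-1],z}|$ and $\prod_{j=x_1}^{y-1}|\rho_j|\cdot|P_{[y+1,x_2],z}|$, then feeds these into the Poisson formula \eqref{eq:Poisson_1} \emph{at the point $y$} inside the window $[x_1,x_2]$ to obtain \eqref{eq:nu_3}. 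This requires $x_1\le y\le x_2$, which holds when $x_1\in I_1$ (there $x_1\in[y-\tfrac54 sq_m,\,y-\tfrac14 sq_m]$ and $x_2\in[y+\tfrac34 sq_m-3,\,y+\tfrac74 sq_m-3]$, roughly) but fails when $x_1\in I_2$ (there $x_2\lesssim \tfrac54 sq_m\ll 6sq_m\le y$).

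The actual standard argument proves the corrected statement $x_1\in I_1$ as follows. Suppose the $x_1$ produced by Lemma \ref{lem:I1_I2_large} lies in $I_2$. Then the window $[x_1,x_1+h-1]$ contains $0$ and $1$ (since $x_1\le -\tfrac34 sq_m<0$ and $x_1+h-1\ge \tfrac14 sq_m-3>1$), and the very same computation \eqref{eq:nu_1}--\eqref{eq:nu_3} with $0$ (or $1$) in place of $y$, using only the polynomial bound $|\Psi_k|\le C|k|$ at the endpoints, yields $\max(|\Psi_0|,|\Psi_1|)$ exponentially small in $sq_m$. This contradicts the normalization of the generalized eigenfunction (without loss $|\Psi_0|+|\Psi_1|\ge 1$), so $x_1\in I_1$. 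Your several drafts all orbit this idea, but with $I_1$ and $I_2$ interchanged you can never close the loop: assuming $x_1\in I_1$ gives smallness of $\Psi_y$, which is the \emph{goal}, not a contradiction; and ``no good block in $I_2$'' together with ``some good block in $I_1$'' simply does not conflict with $|\Psi_0|+|\Psi_1|\gtrsim 1$. The material in your first paragraph about phase shifts relating the two blocks is not needed --- the argument never compares the two blocks to one another.
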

Let $x_2:=x_1+h-1$.
Combining Lemma \ref{lem:upperbounds}, Theorem \ref{thm:LE} with \eqref{eq:LA=LS=LM}, \eqref{eq:int_rho}, \eqref{eq:scalar_upper} and \eqref{eq:Sze2}, we have
\begin{align}\label{eq:nu_1}
\prod_{j=y+1}^{x_2}|\rho_j|\cdot |P_{[x_1,y-1],z}(\theta)|
\leq &\prod_{j=y+1}^{x_2} |\rho_j| \prod_{j=x_1-1}^{y-1} |\rho_j|\cdot  \|S_{y-1,z}\cdots S_{x_1-1,z}\|\\
\leq &e^{\frac{h}{2}(L_-+\varepsilon)} e^{\frac{y-x_1}{2}(L+\varepsilon)}
=e^{\frac{y-x_1}{2}(L_++2\varepsilon)} e^{\frac{x_2-y}{2}(L_-+\varepsilon)},
\end{align}
and similarly
\begin{align}\label{eq:nu_2}
\prod_{j=x_1}^{y-1} |\rho_j| \cdot |P_{[y+1,x_2],z}(\theta)|\leq e^{\frac{x_2-y}{2}(L_++2\varepsilon)} e^{\frac{y-x_1}{2}(L_-+\varepsilon)}.
\end{align}

Finally, combining \eqref{eq:P_den_large}, \eqref{eq:nu_1}, \eqref{eq:nu_2} with \eqref{eq:Poisson},
\begin{align}\label{eq:nu_3}
\ \ \ \ |\Psi_y|\leq \max(e^{-(\frac{L}{2}-25\varepsilon)|y-x_1|} \max(|\Psi_{x_1-1}|, |\Psi_{x_1}|), e^{-(\frac{L}{2}-25\varepsilon)|y-x_2|} |\max(|\Psi_{x_2}|,|\Psi_{x_2+1}|)).
\end{align}
Starting with a $y\in (\frac{1}{40}q_n, \frac{1}{40}q_{n+1})$, we iterate \eqref{eq:nu_3} until we reach at $x_1\leq \varepsilon q_n$ or $x_2\geq \frac{1}{20}q_{n+1}$ or the iteration number reaches $C/\varepsilon$, we have by a standard argument that
\begin{align*}
|\Psi_y|\leq e^{-(\frac{L}{2}-30\varepsilon)y}.
\end{align*}
This shows the upper bound in \eqref{eq:decay_rate}.
The lower bound of \eqref{eq:decay_rate} is standard.
This proves Theorem \ref{thm:main}. \qed

\bibliographystyle{amsplain}

\end{document}